\newcommand{\rem}[1]{\relax}
    \newcommand{\bsc}{\usefont{T1}{cmr}{bx}{sc}}
    \newcommand{\pending}[1][]{{\noindent\highlight{\bsc\ifthenelse{\equal{#1}{}}{[to be written]}{[to be written: {\rm #1}]}}}\xspace}
    \newcommand{\prelims}[1][]{{\noindent\highlight{\bsc\ifthenelse{\equal{#1}{}}{[add to prelims]}{[add to prelims: {\rm #1}]}}}\xspace}
    \newcommand{\highlight}[2][red]{{\color{#1}#2}}
	\definecolor{lightred}{rgb}{1, .60,.60}
\renewcommand{\hat}{\widehat}
\newcommand{\ra}{\rangle}
\newcommand{\la}{\langle}
\newcommand{\msf}{\mathsf}
\newcommand{\WKL}{\msf{WKL}}
\newcommand{\WKLO}{\msf{WKL_0}}
\newcommand{\ACA}{\msf{ACA}}
\newcommand{\RCA}{\msf{RCA}}
\newcommand{\RCAO}{\msf{RCA_0}}
\newcommand{\PA}{\mathrm{PA}}
\newcommand{\M}{\mathcal{M}}
\newcommand{\seq}[1]{{\left\langle{#1}\right\rangle}}
\newcommand{\setN}{ \mathbb{N}}
\newcommand{\setR}{ \mathbb{R}}
\newcommand{\until}[1]{\upharpoonright_{#1}}
\newcommand{\zj}{\emptyset'}
\newcommand{\Cs}{2^{\omega}}
\newcommand{\strs}{2^{<\omega}}
\newcommand{\cl}{\mathrm{cl}}
\theoremstyle{plain}
\newtheorem{theorem}{Theorem}
\newtheorem{lemma}[theorem]{Lemma}
\newtheorem{corollary}[theorem]{Corollary}
\newtheorem{proposition}[theorem]{Proposition}
\theoremstyle{definition}
\newtheorem{definition}[theorem]{Definition}
\newtheorem{question}{Question}
\theoremstyle{remark}
\newtheorem*{claim}{Claim}
\numberwithin{equation}{section}
\begin{document}

\title{On The Strength of Two Recurrence Theorems}

\renewcommand{\datename}{Last compilation:}

\author{Adam R. Day}
\address{Department of Mathematics\\
University of California, Berkeley\\
Berkeley, CA 94720-3840 USA}
\email{adam.day@math.berkeley.edu}
\thanks{The author was supported by a Miller Research Fellowship in the Department of Mathematics at the University of California, Berkeley. The author thanks Ted Slaman for numerous discussions on this material and Denis Hirschfeldt for introducing him to reverse mathematics at the 2010 Asian Initiative for Infinity Graduate Summer School, organized by the Institute for Mathematical Sciences in Singapore.}



\maketitle

\begin{abstract}
This paper uses the framework of reverse mathematics to  investigate the strength of two recurrence theorems of topological dynamics. It establishes that one of these theorems, the existence of an almost periodic point, lies strictly between $\WKL$ and $\ACA$ (working over $\RCA_0$). This is the first example of a theorem with this property.  It also shows the existence of an almost periodic point is conservative over $\RCA_0$ for $\Pi^1_1$ sentences. These results establish  the existence of a new upwards-closed subclass of the $\PA$ degrees.
\end{abstract}

\section{Introduction}
\noindent Dynamical systems are studied by different branches of mathematics in many different forms. 
In the simplest setting, a dynamical system $(X, T)$ is comprised of a set $X$ and a transformation $T: X\rightarrow X$. By placing different requirements on $X$ and $T$,  structure can be added to the system that will influence its behavior.  

Central to the analysis of a dynamical system is the analysis of the orbits of points in the system. Given a point $x \in X$, the orbit of $x$ is the sequence $x, T(x), T^2(x), \ldots.$ If our dynamical system has certain global properties, then this  guarantees the existence of points with certain orbits.

\begin{theorem}[Birkhoff's recurrence theorem]
Let $X$ be a compact topological space and $T:X\rightarrow X$ a continuous transformation. Then there exists $x \in X$ and a sequence $n_1, n_2, \ldots$, such that 
\[\lim_i T^{n_i}(x)\rightarrow x.\]
\end{theorem}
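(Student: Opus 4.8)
The plan is to produce a \emph{minimal} nonempty closed $T$-invariant subset of $X$ and then show that every one of its points is recurrent. (Here, as is standard, $X$ is understood to be nonempty.) First I would let $\mathcal{F}$ denote the family of all nonempty closed $Y\subseteq X$ with $T(Y)\subseteq Y$, partially ordered by reverse inclusion; it is nonempty because $X\in\mathcal{F}$. Given a chain $\mathcal{C}\subseteq\mathcal{F}$, the intersection $\bigcap\mathcal{C}$ is closed, is $T$-invariant, and is nonempty — the members of $\mathcal{C}$ form a nested family of nonempty closed subsets of the compact space $X$, so they have the finite intersection property. Thus every chain has an upper bound, and Zorn's lemma yields a minimal element $M\in\mathcal{F}$.

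Next I would fix an arbitrary $x\in M$ and consider $K=\cl\{T^n(x):n\geq 1\}$, the closure of the forward orbit of $x$. Then $K$ is nonempty and closed, and continuity of $T$ gives $T(K)\subseteq \cl\{T^n(x):n\geq 2\}\subseteq K$, so $K\in\mathcal{F}$. Since $M$ is closed, $T$-invariant, and contains $x$, it contains $T^n(x)$ for every $n\geq 1$, hence $K\subseteq M$; minimality of $M$ then forces $K=M$. In particular $x\in K$, and indeed the same argument applied to $T^k(x)$ shows $x\in\cl\{T^n(x):n\geq k\}$ for every $k\geq 1$. In the metrizable setting in which this theorem is formalized, this membership is witnessed by a strictly increasing sequence $n_1<n_2<\cdots$ with $T^{n_i}(x)\to x$, giving the desired $x$ and sequence.

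\textbf{Main obstacle.} Essentially the whole argument rests on the first step — the extraction of the minimal set $M$ — which as written invokes Zorn's lemma. Since the object of this paper is to calibrate exactly how strong the recurrence statement is, the substantive work will be to replace this appeal to a maximality principle by an effective or arithmetically bounded construction of a minimal (or merely ``sufficiently returning'') closed invariant set, and to analyze the cost of that construction over $\RCAO$. A smaller, separate point is the final step: reading off an honest sequence $(n_i)$ from membership in an orbit closure requires $X$ to be first countable (e.g.\ metrizable); for a genuinely arbitrary compact space one only obtains a convergent net, and the conclusion has to be interpreted accordingly.
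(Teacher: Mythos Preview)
Your argument is correct and is precisely the ``standard proof'' the paper alludes to immediately after stating the theorem: Zorn's lemma produces a minimal nonempty closed invariant set, and minimality forces the orbit closure of any of its points to be the whole set, so every such point is recurrent. The paper does not itself prove the general Birkhoff theorem; it only cites this argument and then, in Section~\ref{section min}, formalizes the second step over $\WKLO$---there obtaining the stronger conclusion of almost periodicity, by showing that the set of points whose orbit avoids a given basic neighborhood would be a proper invariant closed set and is therefore empty, with compactness supplying the uniform return bound.

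Where the paper does something genuinely different is in the proof that $\WKL$ implies $\msf{RP}$ (Section~\ref{sect: RP}). The minimal-subsystem route is deliberately avoided there, since producing a minimal subsystem is equivalent to $\ACA$ over $\WKLO$ (Theorem~\ref{thm: ACA minimal}). Instead the paper builds, by a pigeonhole argument, a computable nested sequence of clopen sets $U_i$ such that every $X\in[U_i]\cap[C]$ returns to $[X\until{i}]$, together with auxiliary covers $V_i$ guaranteeing that each $[U_i]\cap[C]$ is nonempty; the intersection is a nonempty $\Pi^0_1$ class of recurrent points. Your ``Main obstacle'' paragraph anticipates exactly this move: the substance of the paper is to replace the Zorn step by this direct cover construction, which is what pins $\msf{RP}$ at the level of $\WKL$ rather than $\ACA$. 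Your side remark about first countability is also apt, though it becomes moot once the paper restricts attention to Cantor space.
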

Such an $x$ is called a \textit{recurrent point} of the system $(X,T)$. Comparable results hold  if we place a probability measure on the space $X$ and require $T$ be a measure-preserving transformation.  

The standard proof of Birkhoff's recurrence theorem, shows the existence of a point $x$ with the following stronger property (see for example \cite[Theorem 2.3.4]{Tao_2009}). For every neighborhood $N$ of $x$, there is a bound $b$, such that for all $n$, there is a $k <b$ with $T^{n+k}(X) \in N$. Such a point $x$ is called an \textit{almost periodic point} of the system $(X,T)$.

The objective of this paper is to analyze the reverse mathematical strength of the existence of recurrent points and almost periodic points. The motivation for this work  lies not just in the intrinsic interest of Birkhoff's recurrence theorem but in the fact that this is the simplest of a family of recurrence theorems, that have widespread applications. 
In this respect,  Birkhoff's recurrence theorem is similar to Ramsey's theorem and the reverse mathematical study of Ramsey's theorem has been remarkable fruitful.   
Two examples will illustrate the importance of recurrence theorems. Firstly,  Furstenberg's multiple recurrence theorem, a theorem of measure-preserving systems  can be used to prove Szemeredi's theorem.
A second example, which has been studied from a reverse mathematical perspective, is the Auslander-Ellis theorem. This theorem states that if $X$ is compact metric space, with metric $d$, and $T:X\rightarrow X$ is a continuous transformation, then for any point $x$, there exists a point $y$ such that:
\begin{enumerate}
\item $y$ is an almost periodic point of the system.
\item $(\forall \epsilon)(\exists n)(d(T^n(x), T^n(y)) < \epsilon)$.
\end{enumerate}
Blass, Hirst and Simpson have shown that $\ACA_0^{+}$ proves the Auslander-Ellis theorem \cite{Blass_Hirst_Simpson_1987}. It is an open question as to whether or not it follows from $\ACA_0$ \cite{Montalban_2011}. 
The Auslander-Ellis theorem can be used to prove Hindman's theorem. Hindman's theorem states that if the integers are colored with finitely many colors, then there exists an infinite set $S$ such that $\{n \colon n$ is a finite sum of elements of $S\}$ is homogenous.
 Blass, Hirst and Simpson also showed that the strength of Hindman's Theorem lies between $\ACA_0^{+}$ and $\ACA_0$ \cite{Blass_Hirst_Simpson_1987}. Recent work, particular of Towsner, has shed further light on the difficult question of  calibrating the  strength of Hindman's theorem \cite{Beig_Town_2012, Towsner_2011b,Towsner_2011a,Towsner_2012}.

In this paper we will investigate topological dynamical systems where $X$ is a closed subset of Cantor space and $T$ is a continuous transformation. This is a very important class of topological dynamical systems because subsets of natural numbers can be coded as elements of Cantor space. The proof of Hindman's theorem via  the Auslander-Ellis theorem uses such systems. In the next section we will develop and formalize two principles. 
\begin{enumerate}
\item $\msf{RP}$: Every topological dynamical system on Cantor space contains a recurrent point.
\item $\msf{AP}$: Every topological dynamical system on Cantor space contains an almost periodic point.
\end{enumerate}
In Section~\ref{sect: RP}, we will show that over $\RCA_0$, $\msf{RP}$ is equivalent to $\WKL$. This is perhaps a little surprising because the set of recurrent points of a system is not closed. 
The principal $\msf{AP}$ is more unusual. 
In Section~\ref{section min}, we analyze the standard proof of the existence of an almost periodic point and show this requires $\ACA_0$ to carry out.
However, from the perspective of reverse mathematics,  this proof is not optimal. In fact, over $\RCA_0$, the principle $\msf{AP}$ lies \textit{strictly} between $\WKL$ and $\ACA$. This is the first natural example of a principle with this property. The separation between $\msf{AP}$ and $\WKL$ is established in Section~\ref{sect: AP WKL sep}, and the separation between $\ACA$ and $\msf{AP}$ is established in Section~\ref{sect: ACA AP}. 
  
 Harrington proved that $\WKL_0$ is conservative for $\Pi^1_1$ sentences over $\RCA_0$. In Section~\ref{sect: conservation}, we show that $\RCA_0 +\msf{AP}$ also has this property.
 
 The $\PA$ degrees are those Turing degrees that contain a complete extension of Peano Arithmetic. This is a very well-studied class of upwards-closed Turing degrees. 
 From a computability-theoretic perspective, the proof that the principle $\msf{AP}$ lies strictly between $\WKL$ and $\ACA$  establishes the existence of an interesting  upwards-closed strict subclass of the $\PA$ degrees. In Section~\ref{sect: PA degrees}, we conclude with a number of questions about these  $\PA$ degrees.

\section{Topological Dynamics in $\RCA_0$}

\noindent A standard definition of a topological dynamical system on Cantor space is the following.

\begin{definition}
\label{def: system}
A pair $(C,F)$ is a \textit{topological dynamical system} on $\Cs$ if $C$ is a non-empty closed  subset of $\Cs$, and $F$ is a continuous transformation of $\Cs$ such that for all $X \in C$, $F(X) \in C$. 
\end{definition}

Note that this definition requires $F$ to be defined on all elements of $\Cs$ rather than just those elements of $C$. This is not a limitation because of the Tietze extension theorem which can be proved in $\RCA_0$ \cite[Theorem II.7.5]{Simpson_2009}.
  Sometimes $F\!\until{C}$ is required to be a homeomorphism of $C$ but we will not consider that possibility here.
From now on, we will often refer to a topological dynamical system as simply a system.

We  need to consider how we encode a system inside a model of 2\textsuperscript{nd} order arithmetic. We need to be careful with our choice of encoding to ensure that it behaves sensibly for models of $\RCA_0$. We will follow the standard approach of  encoding a closed set as the set of paths through a tree. The approach we take for encoding a continuous transformation is also standard. 

\begin{definition}
\label{def:endo}
A function $f:\strs \rightarrow \strs$ \emph{encodes a continuous transformation of $\Cs$} if 
\begin{enumerate}
\item $f$ is total.
\item $f$ is order preserving.
\item $(\forall l)(\exists m)(\forall \sigma \in \{0,1\}^m)|f(\sigma)|\ge l$.
\end{enumerate}
\end{definition}
Given such an $f$, we will  denote by $F$ the transformation of Cantor space encoded.  In particular,  if $X \in \Cs$, we will denote by $F(X)$,
\[ \lim_{l \in \omega} f(X\until{l}).\]
It could be objected that the third condition should really be stated as for all $X$ and $l$, there exists an $m$ such that $|f(X\until{m})| >l$, and the equivalence of this statement with the third condition is a consequence of the compactness of Cantor space. However, because we are working over $\RCA_0$  we will use the stronger definition above. 
Let $\lambda$ denote the empty string. For convenience, we will assume that any such $f$ has the additional property that for any string $\sigma \ne \lambda$, $|f(\sigma)| < |\sigma|$. This does not result in any loss of generality because given any $f$ encoding a continuous transformation of $\Cs$, we can uniformly find a $\hat{f}$ with this property that encodes the same continuous transformation of $\Cs$ as $f$.

\begin{definition}[$\RCA_0$]
\label{def: system2}
A pair $(C, f)$ encodes a system  if 
\begin{enumerate}[(i)]
\item The set $C$ is a tree.
\item The set $f$ encodes a continuous transformation of $\Cs$.
\item $[C] \ne \emptyset$.
\item For all $\sigma \in C$ we have that $f(\sigma) \in C$.
\end{enumerate}
\end{definition}

The final condition ensures that if  $(C, f)$ encodes a system inside a model $\M$ then if we extend $\M$ by adding additional reals, then $(C, f)$ still encodes a system inside the extended model. This rules out pathological cases. For example, $f$ could map paths in $[C]$ that are not in the model outside of $[C]$. Then if we extend our model by adding such a path $(C,f)$ would no longer be a system.

It is impossible to discuss recurrence points and almost periodic points without discussing orbits. 
Given a system $(C,f)$ and $X \in C$  we need to show that even with the limited induction available in $\RCA_0$, the orbit of $X$ is well-defined.

Given a function $f$ encoding a continuous transformation of $\Cs$, define  
$f:\strs \times \omega \rightarrow \strs$ by 
\[f(\sigma, k) =\begin{cases}
\sigma & k=0 \\
f(f(\sigma, k-1)) & k>0.
\end{cases}
\]
When convenient, we will write $f^k(\sigma)$ for $f(\sigma, k)$. Observe that $f^1 =f$. We define $F^k(X) = \lim_{l\in \omega} f^k(X\until{l})$. The following lemma shows that given Definitions \ref{def:endo} and \ref{def: system2}, we can talk sensibly about orbits of a point in $\RCA_0$.

\begin{lemma}[$\RCA_0$]
\label{lemma:basic}
If $(C,f)$ is a system then for all $k$ 
\begin{enumerate}
 \item \label{i:system} $f^k$ encodes a continuous transformation of Cantor space. 
\item \label{iii:system} $(C, f^k)$ is a system. 
\item \label{ii:system} For all $X \in \Cs$, $F(F^k(X))=F^{k+1}(X)$. 
 \end{enumerate}
\end{lemma}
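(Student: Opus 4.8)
The plan is to establish the three assertions in the order listed, deriving each from its predecessor where needed. For the first assertion I verify the three clauses of Definition~\ref{def:endo} for $f^k$. Totality is immediate, since $f\colon\strs\times\omega\to\strs$ is obtained by primitive recursion and $\RCAO$ proves the existence of primitive-recursively defined functions (the graph is $\Delta^0_1$). Order-preservation of $f^k$ is a $\Pi^0_1$ statement in $k$ (with $f$ a parameter), trivial for $k=0$, and its successor step is immediate from $f^{k+1}(\sigma)=f(f^k(\sigma))$ and order-preservation of $f$; so it follows by $\Sigma^0_1$ induction.

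The one delicate point is the third clause, $(\forall l)(\exists m)(\forall \sigma \in \{0,1\}^m)\,|f^k(\sigma)|\ge l$: for fixed $k$ this is $\Pi^0_2$, so one cannot simply induct on the bare assertion ``$f^k$ encodes a continuous transformation'', as that would require $\Pi^0_2$ induction. Instead I first extract an explicit modulus. Because $f$ is order preserving, the predicate $(\forall\sigma\in\{0,1\}^m)\,|f(\sigma)|\ge l$ is monotone in $m$, so by the third clause for $f$ and the least-number principle the function $h(l):=\text{least such }m$ is total with $\Delta^0_1$ graph and hence exists as a set. Define $G\colon\omega\times\omega\to\omega$ by $G(0,l)=l$ and $G(k+1,l)=G(k,h(l))$; this exists in $\RCAO$, being primitive recursive in $h$. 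Now prove by $\Sigma^0_1$ induction on $k$ the \emph{$\Pi^0_1$} statement $(\forall l)(\forall\sigma\in\{0,1\}^{G(k,l)})\,|f^k(\sigma)|\ge l$: in the successor step, $\sigma\in\{0,1\}^{G(k+1,l)}=\{0,1\}^{G(k,h(l))}$ gives $|f^k(\sigma)|\ge h(l)$ by the inductive hypothesis; letting $\tau$ be the length-$h(l)$ prefix of $f^k(\sigma)$, the definition of $h$ gives $|f(\tau)|\ge l$, and $\tau\preceq f^k(\sigma)$ with $f$ order preserving yields $f^{k+1}(\sigma)=f(f^k(\sigma))\succeq f(\tau)$, so $|f^{k+1}(\sigma)|\ge l$. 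Taking $m:=G(k,l)$ gives the third clause for every $f^k$, completing the first assertion.

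For the second assertion, clauses (i) and (iii) of Definition~\ref{def: system2} are inherited from $(C,f)$, clause (ii) is the first assertion, and clause (iv) --- $(\forall\sigma\in C)\,f^k(\sigma)\in C$ --- is $\Pi^0_1$ and follows by $\Sigma^0_1$ induction on $k$ from clause (iv) for $(C,f)$. For the third assertion no further induction on $k$ is needed (note $F^k(X)$ exists as a real once the first assertion is available): I show directly that the infinite strings $F(F^k(X))$ and $F^{k+1}(X)$ agree on every prefix. Given $n$, pick $m$ (via the third clause for $f$) with $|f(F^k(X)\until{m})|\ge n$, then pick $l$ (via the third clause for $f^k$, now available) with $|f^k(X\until{l})|\ge m$. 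By order-preservation of $f^k$ and its modulus property the strings $f^k(X\until{l'})$ are nested and converge to $F^k(X)$, so $F^k(X)\until{m}=f^k(X\until{l})\until{m}=:\tau$; then $\tau\preceq f^k(X\until{l})$ and, since $f$ is order preserving, $f(\tau)\preceq f(f^k(X\until{l}))=f^{k+1}(X\until{l})$. As $|f(\tau)|\ge n$ and $|f^{k+1}(X\until{l})|\ge n$, the first $n$ bits of $F(F^k(X))$ (equal to those of $f(\tau)$) and of $F^{k+1}(X)$ (equal to those of $f^{k+1}(X\until{l})$) coincide.

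The main obstacle, and essentially the reason for arranging the proof this way, is that the third clause of Definition~\ref{def:endo} is $\Pi^0_2$: the induction on $k$ must be set up so that the inducted statement is $\Pi^0_1$, which is why the modulus $G$ is produced by primitive recursion \emph{before} the induction rather than being extracted inside it. Everything else is a routine $\Sigma^0_1$ induction or a direct consequence of order-preservation together with the moduli furnished by the first assertion.
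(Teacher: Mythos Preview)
Your proof is correct. The one genuine difference from the paper is in how you handle the third clause of Definition~\ref{def:endo} for part~\eqref{i:system}. The paper avoids building an explicit modulus: it simply fixes $l$ and then does $\Sigma^0_1$ induction on $k$ for the formula $(\exists m)(\forall \sigma \in \{0,1\}^m)\,|f^k(\sigma)|\ge l$, obtaining the witness at stage $k+1$ from the witness at stage $k$ by one application of the third clause for $f$. Your worry that the bare assertion is $\Pi^0_2$ in $k$ is right, but the simpler remedy is to swap the quantifiers (fix $l$, then induct) rather than to manufacture $G$ by primitive recursion first; your route does work and has the side benefit of producing a uniform modulus $G(k,l)$ for $f^k$. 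The remaining items --- totality by primitive recursion, order-preservation by $\Pi^0_1$ induction, clause~(iv) of Definition~\ref{def: system2} via the least-number principle for a computable set, and the direct limit argument for~\eqref{ii:system} --- match the paper's proof essentially line for line.
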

\begin{proof}
\eqref{i:system}. The function $f^k$ is total because it has been defined by primitive recursion from $f$. It is order preserving by $\Pi_1$ induction on $k$ for the formula 
\[(\forall k)(\forall \sigma, \tau, \rho, \pi)((\sigma \preceq \tau \wedge \rho = f^k(\sigma)  \wedge \pi = f^k(\tau) )\rightarrow \rho \preceq \pi).\]
The third condition is established by fixing $l$, then inducting on $k$ for the formula
\[ (\forall k)(\exists m)(\forall \sigma \in \{0,1\}^m)(\forall \tau \in \{0,1\}^{<l})(f^k(\sigma) \ne \tau).\] 
This formula holds trivially for the case $k=0$ (take $m >l$). For the case $k=n+1$, let $m_n$ witness the above formula for $k=n$. 
Now let $m$ be such that for all $\sigma \in \{0,1\}^m$, $|f(\sigma)| \ge m_n$. The existence of $m$ comes from our assumption on $f$.  Note that if $\sigma \in \{0,1\}^m$, then making use of the associative law (provable in $\RCA_0$) we have
\[f^{k+1}(\sigma)  = f^k(f(\sigma)) = f^k(\rho)\]
where $|\rho| \ge m_n$ and so $|f^k(\rho)| \ge l$ and in particular $f^k(\rho)$ cannot equal any string of length strictly less that $l$.

\eqref{iii:system}. The first three conditions for $(C, f^k)$ to be are system are met trivially. Fix $\sigma \in C$. We will show that for all $k$, $f^k(\sigma) \in C$.
The set $\{n : f^n(\sigma) \not \in C\}$ is computable. Hence, if it is non-empty, it has a least element $k$. As $\sigma \in C$, $k$ cannot be $0$. Let $\tau = f^{k-1}(\sigma)$ so $\tau \in C$. But $f^k(\sigma) = f(f^{k-1}(\sigma)) = f(\tau) \in C$, a contradiction. 

\eqref{ii:system}. Fix a $k$ and $X \in C$. For any $l$, there exists some $m_l$ such that 
$F^k(X)\until{l} \preceq f^k(X\until{m_l})$. Hence
\begin{multline*}
F(F^k(X)) = \lim_l f(F^k(X)\until{l}) \preceq\lim_l f(f^k(X\until{m_l})) \\   = \lim_l f^{k+1}(X\until{m_l})
=F^{k+1}(X). \qedhere
\end{multline*}
\end{proof}

The orbit of $X$ under $F$ is the sequence $\seq{F^k(X) \colon k \in \omega}$. Note that this is uniform and hence 
$\oplus_{k \in \omega} F^{k}(X)$ exists by recursive comprehension in any model of $\RCA_0$ that includes $X$ and $f$. 

%
%
%

\section{Recurrent Points}
\label{sect: RP}
\noindent We call $X$ a \emph{recurrent point} of a  topological dynamical system $(C, f)$, if 
$X \in [C]$ and 
\[(\forall n, c)(\exists k)(F^{n+k}(X) \succeq X\until{c}).\]

We call $X$ an \emph{almost periodic point} of a  topological dynamical system  $(C, f)$, if 
$X \in [C]$ and 
\[(\forall c)(\exists b)(\forall n)(\exists k <b)(F^{n+k}(X) \succeq X\until{c}).\]

 This leads to two principles. First $\msf{RP}$ is the principle that every topological dynamical system on 
 $\Cs$ contains a recurrent point. The second $\msf{AP}$ is the principle that every topological dynamical system on $\Cs$ contains an almost periodic point.  Over $\msf{RCA_0}$ we have the obvious implication that $\msf{AP}$ implies $\msf{RP}$ because an almost periodic point is a recurrent point. 

The following theorem would be trivial if condition (iii)  in Definition~\ref{def: system2}, was replaced by requiring the tree $C$ to be infinite. Given such a $C$, we could simply take $(C,f)$ be our system where $f$ is the identity map. Any recurrent point of this system would have to be an element of $[C]$ hence proving $\WKL$. 


\begin{theorem}
Over $\RCAO$, $\msf{RP}$ implies $\WKL$.
\end{theorem}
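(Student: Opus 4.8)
The plan is to work with $\WKL$ in its tree form: I must show that $\RCAO + \msf{RP}$ proves every infinite $T \subseteq \strs$ has an infinite path. As the remarks preceding the theorem make clear, the substance is that condition (iii) of Definition~\ref{def: system2} forces $[C] \neq \emptyset$, so $T$ itself (with the identity) cannot simply be fed into $\msf{RP}$. Instead I will build, uniformly computably from $T$, a system $(C,f)$ whose underlying tree always has a path but whose \emph{only} recurrent points are codes of paths of $T$; then $\msf{RP}$ supplies a recurrent point of $(C,f)$ from which a path of $T$ is read off.

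The design principle for $(C,f)$ is a ``search with a progress measure.'' A point of $C$ encodes a candidate $P \in \Cs$ together with a commitment level $j \in \omega \cup \{\infty\}$ (how far $P$ has been checked to run through $T$) and an auxiliary coordinate whose sole job is to keep a failed search from returning to a configuration it has already occupied; the one constraint on a genuine state is $P\until{j} \in T$. The map $F$ does one step of checking: if $P\until{j+1} \in T$ it increments $j$; if $P\until{j+1} \notin T$ it resets $j$ and advances the auxiliary coordinate; and any state with $j = \infty$, which forces $P \in [T]$, is a fixed point. The tree $C$ is deliberately ``fat'' --- it contains, for instance, the state with $P = 0^{\omega}$ and $j = 0$ --- so $[C] \neq \emptyset$ whether or not $[T] = \emptyset$; and since $F$ needs only a finite prefix of a state to determine any finite prefix of its image, $F$ is coded by a genuine order-preserving $f$ (totality, order-preservation, and the uniform length clause of Definition~\ref{def:endo} being routine), which is extended to all of $\strs$ by the Tietze extension theorem so that Definition~\ref{def: system2} is satisfied.

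The verification splits into two observations, each carried out with $\Delta^0_1$ comprehension applied to the orbit $\seq{F^k(X) \colon k \in \omega}$ of a state $X$ (which exists by recursive comprehension). First, a state with $j = \infty$ is fixed, hence recurrent, and its $P$-coordinate lies in $[T]$; so each path of $T$ gives a recurrent point. Second --- the crux --- \emph{no state with finite $j$ is recurrent}: along its orbit either the auxiliary coordinate is advanced infinitely often, in which case the orbit eventually leaves and never re-enters a neighbourhood of $X$ (the advanced coordinate witnesses that a fixed early bit of $X$ has been changed), or it is advanced only finitely often, in which case past some stage $F$ merely increments $j$ with $P$ frozen, so $j \to \infty$, the orbit converges to the fixed state with that $P$, and --- every state on the orbit being a state of $C$ --- $P\until{m} \in T$ for arbitrarily large $m$, i.e.\ $P \in [T]$; in both cases $X$ is not recurrent, and in the second case a path has already appeared. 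Hence the recurrent point produced by $\msf{RP}$ has $j = \infty$, and its $P$-coordinate is a path of $T$. Since $T$ was arbitrary, $\WKL$ holds.

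The step I expect to be the main obstacle is making the auxiliary ``anti-cycling'' coordinate do its job inside a \emph{compact} space. The naive choice --- a natural-number counter bumped at each failed step --- kills cycles among states whose counter is finite; but because $C$ must be the set of paths through a tree, its closure inescapably contains ``counter $=\infty$'' limit states, and there the counter can no longer be bumped, so this naive system reintroduces precisely the cyclic, hence spurious, recurrent points one set out to avoid. The real work is to choose the auxiliary coordinate, and the way $F$ advances it, so that every limit state is itself non-recurrent unless its $P$-coordinate already names a path of $T$. Once that is arranged, the orbit-chasing above goes through verbatim over $\RCAO$, and $\msf{RP}$ finishes the argument.
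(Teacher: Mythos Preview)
Your diagnosis of the difficulty is exactly right, but your proposal stops precisely where the theorem begins: you never actually construct the auxiliary coordinate, and as you yourself note, the naive counter forces ``counter $=\infty$'' limit states into $[C]$ which are then fixed by $F$ and hence recurrent, with $P$-coordinate possibly outside $[T]$. Everything in your ``verification'' paragraph is conditional on having already solved this, so at present there is no proof --- only a specification of what a proof would have to achieve. Note too that in your scheme the $P$-coordinate never changes along an orbit, so the dynamical system is really an $\omega$-indexed family of one-variable walks; this makes the compactification problem harder, not easier, since each walk individually must be arranged to have no spurious recurrent limit.

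The paper's approach is quite different and worth absorbing. Rather than carrying a separate candidate $P$, a commitment level $j$, and an anti-cycling gadget, it lets the \emph{state itself} be the current search position, and it searches the whole tree in lexicographic order. Concretely, it works in $3^{\omega}$ (viewing $T\subseteq 2^{<\omega}\subseteq 3^{<\omega}$): if $\sigma\in T$ then $f(\sigma)$ is its predecessor; if $\sigma\notin T$ then $f$ moves $\sigma$ to the lexicographically next candidate, using the third symbol $2$ as a ``carry'' and sending $2\rho$ back to $0^{|\rho|}$. The point is that the search order is already a compact linear order on the space, so no auxiliary counter is needed and no pathological limit states are introduced. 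The two claims then show that a recurrent orbit would have to both wrap around (pass through $[2]$) and stay lexicographically below any long $\tau\in T$, which is impossible; hence every recurrent point lies in $[T]$. This lexicographic-search idea is the missing ingredient in your outline.
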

\begin{proof}
Let $T$ be an infinite computable tree on $2^{<\omega}$. We will regard $T$ as computable tree in $3^{<\omega}$ (i.e.\ a computable subtree of $3^{<\omega}$ such that no node of $T$ contains a $2$). 
We will define a  system on $3^\omega$ such that any recurrent point of the system is a path on $T$. As  $3^\omega$ is computably homeomorphic to $2^\omega$, this is sufficient to prove the theorem. The idea behind the following definition of $f$ is that if $X \in 3^\omega$ is a path on $T$, then $F(X)=X$. If $X$ is not a path on $T$, then the orbit of $X$ moves in increasing lexicographical order searching for a path on $T$, looping around if it extends $2$.  The extra branching of  $3^\omega$ allows us to move the orbit of $F(X)$ if $X$ is not a path on~$T$.

Define the following function $f:3^{<\omega} \rightarrow 3^{<\omega}$. First $f(\lambda) = \lambda$. Second if $|\sigma| >0$, let $n = |\sigma| -1$. If $\sigma \in T$, let $f(\sigma) = \sigma \until{n}$. If $\sigma \not \in T$, then let $\pi$ be the shortest initial segment of $\sigma$ such that $\pi \not \in T$. Because $T$ is a subtree of $2^{<\omega}$, if $\pi$ contains a $2$, then $\pi$ must end with $2$. Define
\[f(\sigma) = 
\begin{cases}
\rho 1 0^\omega \until{n} & \pi = \rho 0 \vee \pi = \rho 02 \\
\rho 2 0^\omega \until{n} & \pi = \rho 1 \vee \pi = \rho 12 \\
0^n & \pi =2.
\end{cases}\]

It is not difficult to verify that $(3^\omega, f)$ is a system. Let $\le_{lex}$ be the lexicographical ordering on finite strings.  (Recall that under this ordering $\sigma \le_{lex} \tau$ if $\sigma \preceq \tau$ or $\sigma(i) < \tau(i)$ for the least $i$ where these strings differ.)

\begin{claim} Let $n >0$. Let $\sigma_0, \sigma_1, \ldots, \sigma_n$ be a finite sequence of strings such that $\sigma_0 \succ \sigma_n$, for all $i <n$, $f(\sigma_i)=\sigma_{i+1}$ and $\sigma_n \not \in T$. Then for some $k < n$, $2 \preceq \sigma_k$.
\end{claim} 
\begin{proof}
Consider $S = \{ i \le n \colon \sigma_i \le_{lex} \sigma_n \wedge \sigma_i \not \in T\}$. The set $S$ is not empty as it contains $n$. As $S$ is computable, it has a least element $l$. 
Now $l \ne 0$ as $\sigma_0$ is a strict extension of $\sigma_n$. Let $k=l+1$. First $\sigma_k \not \in T$ as otherwise $f(\sigma_k) = \sigma_l \in T$. By minimality of $l$ we have that $\sigma_k \not \le_{lex} \sigma_n$ and in particular $\sigma_k \not \le_{lex} \sigma_l$. Now because $\sigma_k \not \in T$, the definition of $f$ implies  that $\sigma_k \succeq 2$.
%
%
%
\end{proof}

\begin{claim}If $|\tau|> |\sigma|$, $\sigma <_{lex} \tau$, and  $\tau \in T$ then 
$f(\sigma)<_{lex} \tau$.
\end{claim}
\begin{proof}
If $\sigma \prec \tau$, then $\sigma \in T$ and so $f(\sigma)$ is an initial segment of $\sigma$ and  the result holds.
Otherwise let $\xi$ be the least common initial segment of $\sigma$ and $\tau$. So $\xi0 \preceq \sigma$. Because $\xi$ is on the tree, either $f(\sigma)$ extends $\xi0$ or $f(\sigma) = \xi10^j$ for some $j$. However as $|\tau|> |\sigma| \ge |f(\sigma)|$ this implies that in either case $f(\sigma) <_{lex} \tau$.
\end{proof}

Let $R$ be a recurrent point for this system. Assume  $R  \not \in [T]$. Take $\sigma  \prec R$ such that $\sigma \not \in T$. As $R$ is a recurrent point, there exists a sequence 
$\sigma_0, \ldots, \sigma_n$ such that $\sigma_n = \sigma \prec \sigma_0 \prec R$ and 
$f(\sigma_i) =\sigma_{i+1}$. By the  1\textsuperscript{st} claim for some $k<n$, $2 \preceq \sigma_k$.

Let $\tau \in T$ such that for all $i \le n$, $|\tau| > |\sigma_i|$. Now $\sigma_{k+1}$ is a string of all $0$'s,  and  $|\sigma_{k+1}| <|\tau|$. Hence  $\sigma_{k+1} <_{lex} \tau$.  It follows that $\sigma_n <_{lex} \tau$ by inducting over the 2\textsuperscript{nd} claim. Now as $\sigma_0 \succeq \sigma_n$ and $\sigma_n \not \preceq \tau$ because $\sigma_n \not \in T$, this implies that 
$\sigma_0 <_{lex} \tau$.

But this is impossible. If $\sigma_0 <_{lex} \tau$ then again by inducting over the 2\textsuperscript{nd} claim, for all $i$, $\sigma_i <_{lex} \tau$ and so $\sigma_i \not \succeq 2$. This contracts  the fact that $\sigma_k \succeq 2$.  
Hence $R \in [T]$.
\end{proof}

\begin{theorem}Over $\msf{RCA_0}$, $\msf{WKL}$ implies $\msf{RP}$. 
\end{theorem}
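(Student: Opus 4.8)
The plan is to argue inside $\WKLO$: fix a system $(C,f)$ and build a recurrent point directly, so that $\msf{RP}$ follows. The first step is a reformulation of recurrence tailored to a compactness argument. For $X\in[C]$ I claim that $X$ is recurrent if and only if $X$ lies in the closure of its forward orbit, equivalently if and only if for every $c$ there is some $m\ge 1$ with $F^m(X)\succeq X\until c$. The forward direction is the case $n=1$ of the definition of recurrence; the reverse is elementary, since the least $m$ witnessing level $c$ is nondecreasing in $c$, so it is either bounded — in which case it is eventually constant and $F^{m}(X)=X$, so $X$ is periodic, hence recurrent — or unbounded, in which case for every level $c$ and every bound $N$ some $m>N$ witnesses $c$, which yields recurrence. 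Thus it suffices to produce $X\in[C]$ whose initial segments $X\until c$ are each visited by the orbit at least once. The proof will use $\WKL$ twice: once to have any point of $[C]$ at all (over $\RCAO$ a system can fail to have a path in the model), and once to assemble finite data into the point.

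The dynamical content is a ``finite Birkhoff'' lemma. For $\sigma\in C$ let the recurrence depth $d(\sigma)$ be the largest $k\le|\sigma|$ such that for each $i\le k$ there is some $m\ge 1$ with $f^m(\sigma)\succeq\sigma\until i$, where $f^m(\sigma)$ is the iterate applied to the full string $\sigma$. Since $f$ is order preserving and strictly shortens strings, $d(\sigma)$ is decidable, the witnessing $m$ lie below $|\sigma|$ automatically, and $d$ is nondecreasing along end-extensions. The lemma, which I would prove in $\WKLO$, is that for every $k$ there is $\sigma\in C$ with $d(\sigma)\ge k$. The proof: take any $Y\in[C]$ (here $\WKL$ enters); apply the infinite pigeonhole principle — Ramsey's theorem for singletons, provable in $\RCAO$ — to the colouring $m\mapsto F^m(Y)\until k$ to get a string $w$ of length $k$ with $F^m(Y)\succeq w$ for infinitely many $m$; fix one such $m^{*}$ and pass to $Y':=F^{m^{*}}(Y)$, which is again in $[C]$ (by the earlier Lemma), satisfies $Y'\until k=w$, and — shifting that infinite set of $m$'s — returns to $w$, hence to $Y'\until i$ for every $i\le k$, at infinitely many times; finally let $\sigma$ be an initial segment of $Y'$ long enough that one such return to $Y'\until i$ is already witnessed on $\sigma$ for each $i\le k$. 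Then $\sigma\in C$ and $d(\sigma)\ge k$.

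The assembly builds a tree of finite approximations: strings $\sigma\in C$ all of whose initial segments $\tau$ are required to satisfy $d(\tau)\ge g(|\tau|)$ for a suitable unbounded nondecreasing $g$, arranged so that the tree is prefix closed, infinite, and recursive in the data assembled so far. This is a subtree of $2^{<\omega}$, so applying $\WKL$ yields a branch $X\in[C]$ with $d(X\until n)\to\infty$. For any $c$, choosing $n$ with $d(X\until n)\ge c$ and taking $i=c$ gives some $m\ge 1$ with $F^m(X)\succeq X\until c$; so $X$ lies in the closure of its orbit and is recurrent by the first paragraph.

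The step I expect to be the real obstacle is the assembly, specifically the choice of $g$ and the verification that the tree is infinite. ``$d(\sigma)$ is large'' is not a prefix-closed condition — short initial segments necessarily have small depth — and the least length at which depth $k$ first becomes attainable is controlled by the moduli of convergence of the maps $F^m$, which are not computably bounded; so $g$ cannot be fixed in advance from $C\oplus f$ alone but must be drawn from (the proof of) the finite Birkhoff lemma, and the two appeals to $\WKL$ must be interleaved accordingly. Checking that the tree so obtained is genuinely prefix closed, infinite, and has \emph{every} branch recurrent — rather than merely ``not yet seen to fail'' — is the delicate bookkeeping; the dynamical input, by contrast, is just the pigeonhole-plus-shift argument above. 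One could equivalently phrase the whole argument model-theoretically, using that $\WKLO$ proves every set is computed together with a set of $\PA$ degree relative to it, but the core construction would be the same.
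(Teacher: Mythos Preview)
Your overall strategy---prove a finite recurrence lemma via pigeonhole and then assemble a recurrent point by $\WKL$---is the right one, and is exactly the shape of the paper's argument. Two points, one small and one substantive.

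First, a correction: the infinite pigeonhole principle you invoke (``Ramsey's theorem for singletons'') for $2^k$ colours with $k$ a parameter is $\msf{RT}^1_{<\infty}$, which over $\RCAO$ is equivalent to $B\Sigma_2$ and is \emph{not} provable in $\WKLO$. Fortunately you do not need it. Finite pigeonhole among $F^0(Y)\until k,\ldots,F^{2^k}(Y)\until k$ already yields $j<j'$ with the same length-$k$ prefix; setting $Y'=F^j(Y)$ gives a single return $F^{j'-j}(Y')\succeq Y'\until k$, and a long enough initial segment of $Y'$ then has depth $\ge k$. This is precisely the pigeonhole step in the paper's proof.

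The real gap is the assembly, and you are right to flag it. The difficulty is not merely bookkeeping. Your finite Birkhoff lemma produces, for each $k$, a string $\sigma_k$ with $d(\sigma_k)\ge k$; but these come from \emph{different} shifts $F^{m^*}(Y)$ and need not be comparable, so there is no reason any single $X\in[C]$ has $d(X\until n)\to\infty$, and hence no reason your tree is infinite for any unbounded $g$ you can compute from $C\oplus f$. Nor can you ``interleave'' applications of $\WKL$: in $\WKLO$ you apply $\WKL$ once, to a single tree recursive in the data; a $g$ whose definition requires a path through $C$ at each stage is not available.

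The paper's solution is to drop the scalar depth function and instead build, recursively in $C\oplus f$, a nested sequence of \emph{finite} sets of strings $U_0\supseteq[U_1]\supseteq[U_2]\supseteq\cdots$, where $U_{i+1}$ consists of strings extending an element of $U_i$ that also witness a return to their own length-$(i{+}1)$ prefix. The crucial extra idea is that one does not try to show $[U_i]\cap[C]\ne\emptyset$ directly; one proves the stronger fact that finitely many $F$-preimages of $[U_i]$ already cover $[C]$ (this is where finite pigeonhole plus compactness enter), which forces $[U_i]\cap[C]\ne\emptyset$ since orbits remain in $[C]$. A single application of $\WKL$ to the $\Pi^0_1(C\oplus f)$ class $\bigcap_i[U_i]\cap[C]$ then yields the recurrent point. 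The inductive nesting of the $U_i$ is exactly what makes the level-$k$ witnesses cohere, and is the missing ingredient in your sketch.
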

\begin{proof}
Let $(C, f)$ be a system. 
We can define the set of recurrent points of this system, $\mathcal{R}$, as follows.
\[\mathcal{R} = \{ X \in [C] \colon (\forall c)( \exists n, l > c )(f^n(X\until{l}) \succeq X\until{c})\}.\] 
This shows that $\mathcal{R}$ is  $\Pi^0_2$ in  $C\oplus f$. 
In order to prove that $\msf{WKL}$ implies $\msf{RP}$, it is sufficient to  show  there is a non-empty $\Pi^0_1(C\oplus f)$ class contained in $\mathcal{R}$.
We will construct a computable sequence of finite sets of strings
$\seq{ U_i \colon i\in \omega}$ and let 
$\bigcap_i [U_i] \cap [C]$ be our $\Pi^0_1(C\oplus f)$ class. We will ensure that if $X \in [U_i] \cap C$, then for some  $n, l>i$, $f^n(X\until{l}) \succeq X\until{i}$, hence if $X \in  \bigcap_i [U_i] \cap [C]$, then $X\in \mathcal{R}$ and so $X$ is a recurrent point of $(C,f)$. 

The difficulty with defining $U_i$, is that it is possible that $[U_i] \cap [C]$ might be empty.  To avoid this occuring, we will ensure that for all $i$, there is an $s_i$ such that, $\bigcup_{n<s_i} F^{-n}([U_i]) \supseteq [C]$. This means that no $[U_i]$ can be removed entirely from $[C]$ because otherwise  $[C]$ would  either be empty or, for some $X\in [C]$, there would be some $n$ such that $F^{n}(X) \not \in [C]$. In either case $(C,f)$ would not be a system. 

Let $U_0 =V_0 =\{\lambda\}$ and $s_i=0$. We will assume that we are given $U_i$ and $V_i$,  both finite sets of strings and $s_i$ a number such that:
\begin{enumerate}
\item $(\forall \tau \in U_i) (\exists n) 
(i \le n \le s_i \wedge f^n(\tau) \succeq \tau \until{i})$.
\item $(\forall \sigma \in V_i)(\exists n \le s_i)
(\exists \tau \in U_i) (f^n(\sigma) \succeq  \tau)$.
\item $V_i$ is an open cover of $C$.
\end{enumerate}
These conditions hold trivially for the case $i=0$. We inductively define $U_{i+1}[s]$ and $V_{i+1}[s]$ as follows. 
\begin{align*}
U_{i+1}[s] = \{ \sigma \in 2^{<\omega}  &\colon ((\exists \tau \in U_i) (\sigma \succ \tau))  \wedge \\
& (\exists n)((i \le n \le s )\wedge(f^n(\sigma) \succeq \sigma \until{i})) \}.\\  
V_{i+1}[s] = \{  \sigma \in 2^{<\omega} &\colon (\exists n \le s)(\exists \tau \in U_{i+1}[s])( f^n(\sigma) \succeq \tau)\}.
\end{align*}
\noindent These definitions imply that: 
\begin{enumerate}
\item $[U_{i+1}[s]] \subseteq [U_i]$.
\item $[U_{i+1}[s]] \subseteq [U_{i+1}[s+1]]$.
\item $[V_{i+1}[s]] \subseteq [V_{i+1}[s+1]]$.
\end{enumerate}

\begin{claim}
 $\bigcup_s [V_{i+1}[s]] \supseteq C$. 
\end{claim}
\begin{proof}
By applying bounding, there is some  $h > \max\{|\tau| \colon \tau \in V_i\}$ such that 
\[(\forall \sigma \in \{0,1\}^h)(\forall m \le s_i)(|f^m(\sigma)| \ge i+1).\] 
 Take $X \in C$. By the pigeon-hole principle there is some $\sigma \in \{0,1\}^h$ and $j, k \in \omega$ such that $F^j(X) \in [\sigma]$
and $F^{j+k}(X) \in [\sigma]$.
We can also ensure that $k \ge i+1$.
 Now applying $\WKL$, we know that $\sigma$ extends some element of $V_i$. From the definition of $V_i$, this means that for some $m \le s_i$,  $f^m(\sigma)$ extends some $\tau \in U_i$. Let $Y= F^{j+m}(X)$ so $Y \succ f^m(\sigma) \succeq \tau$. Further
$F^k(Y) = F^{j+k+m}(X) \succ f^m(\sigma) \succeq \tau$ as well. Finally, 
$Y\until{i+1} = F^k(Y)\until{i+1}$ because $|f^m(\sigma)| \ge i+1$. Take $l$ such that $f^k(Y\until{l}) \succeq f^m(\sigma)$. 
Thus $Y\until{l}  \in U_{i+1}[\max\{k,l\}]$ and $X \in [V_{i+1}[s]]$ where $s >\max\{j+m,k,l\}$ is large enough such that $f^{j+m}(X\until s) \succeq Y\until{l}$.
\end{proof}
 
Hence by compactness, there is some  least $s_{i+1}$ such any string of length $s_{i+1}$ in $C$ extends some element of  $V_{i+1}[s_{i+1}]$. We define $U_{i+1} =  U_{i+1}[s_{i+1}]$ and $V_{i+1} =  V_{i+1}[s_{i+1}]$. Note that $[U_{i+1}]$ is a closed set in Cantor Space. 
Fix $i$. Take any $X \in [C]$. We know that $X \in [V_i]$ and hence there exists some $n \le s_i$ such that $F^n(X) \in [U_i]$. By Lemma~\ref{lemma:basic}, $F^n(X) \in [C]$. Hence $[U_i] \cap [C] \ne \emptyset$. 
Thus $\seq{[U_i] \cap [C] \colon i \in \omega}$ is a nested sequence of non-empty closed sets and so by $\WKL$ contains an element $R$, which is a recurrent point of $(C,f)$.
\end{proof}

Note that the proof given uses the fact that the sets $[U_i]$ are clopen. The proof can be extended to certain spaces which do not have a basis of clopen sets, such as the unit interval, by adding the condition that for any open set $E  \in U_{i+1}$, we have that the closure of $E$ is contained in  $[U_i]$. To find our recurrent point, we take a point  $R \in \bigcap_i (\cl[U_i] \cap [C])$. The same argument shows that this set is not empty (instead of adding $Y\until{l}$ to $U_{i+1}$ find an $\epsilon$ such that $\cl[B(Y; \epsilon)] \subset [U_i]$ and add this ball to  $U_{i+1}$). However, $R$ must be an interior point of each set $\cl[U_i]$ and  so as $R\in \bigcap_i [U_i]$, $R$ is a recurrent point.

\section{Minimal Systems}
\label{section min}

\noindent We  now investigate the principle $\msf{AP}$.
The standard proof that every topological dynamical system has an almost periodic point uses the existence of minimal subsystems. We call a system $(C,f)$ \emph{minimal}, if for any system $(D,f)$ such that $[D] \subseteq [C]$, we have that $[D]=[C]$.

Let $(C, f)$ be a system. By Zorn's lemma, $(C, f)$ contains a minimal subsystem $(D,f)$. Now by the following standard lemma, which we can formalize in $\msf{WKL_0}$, every point of $D$ is an almost periodic point of $(D,f)$ and hence an almost periodic point of  $(C, f)$.

\begin{lemma}
$\msf{WKL_0}$ proves that any path in a minimal system is almost periodic.
\end{lemma}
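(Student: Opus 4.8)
The plan is to argue contrapositively: if $X \in [D]$ is not almost periodic, then the orbit closure of $X$ gives a proper closed invariant subset, contradicting minimality. Concretely, suppose $X$ fails the almost periodicity condition, so there is some $c$ such that for every $b$ there is an $n = n(b)$ with $F^{n+k}(X) \not\succeq X\until{c}$ for all $k < b$. I would then form the set
\[
E \;=\; \{\, Y \in [C] \;:\; (\forall \ell)(\exists m)\, f^m(X\until{m}) \succeq Y\until{\ell} \,\},
\]
i.e.\ the closure of the forward orbit $\{F^m(X) : m \in \omega\}$. The first point to check is that $E$ is a closed set coded by a tree, that $[E] \neq \emptyset$ (it contains accumulation points of the orbit, which exist by compactness / $\WKL$), and that $E$ is $F$-invariant: if $Y$ is a limit of $F^{m_i}(X)$ then $F(Y)$ is a limit of $F^{m_i+1}(X)$ by continuity of $F$ and Lemma~\ref{lemma:basic}\eqref{ii:system}. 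Hence $(E, f)$ is a system with $[E] \subseteq [C]$.

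The key step is to show $[E] \subsetneq [C]$, which by minimality of $(C,f)$ is the desired contradiction. Here is where the failure of almost periodicity is used. From the negation of almost periodicity at level $c$, I would extract (using $\WKL$ to pass to a convergent subsequence) a point $Z \in [C]$ lying in the orbit closure of $X$ such that no forward iterate $F^k(Z)$ ever extends $X\until{c}$ — informally, $Z$ is a limit point of the orbit seen through "long gaps" of the pattern $X\until c$, so the whole forward orbit of $Z$ avoids the basic clopen set $[X\until{c}]$. But $X$ itself extends $X\until{c}$, so $X \in [C] \setminus ([E] \text{ viewed from } Z)$... more precisely, the orbit closure of $Z$ is a closed invariant subset of $[C]$ disjoint from $[X\until c]$, hence a proper subsystem, and this is the contradiction with minimality. (Equivalently: if $[E] = [C]$ then $X$ is in the orbit closure of $Z$, so some $F^k(Z)$ comes arbitrarily close to $X$ and in particular extends $X\until c$, contradicting the choice of $Z$.) All of this orbit-closure and subsequence extraction is available in $\WKL_0$ since we are dealing with $\Pi^0_1$ classes and applying compactness finitely often.

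**Main obstacle.** The delicate point is the extraction of the point $Z$ whose entire forward orbit avoids $[X\until c]$: from "for each $b$ there is a window of length $b$ missing the pattern" one gets, for each $b$, a point $Y_b$ in the orbit whose first $b$ iterates all avoid $[X\until c]$; then one wants a limit point $Z$ of the sequence $\langle Y_b \rangle$ whose \emph{every} iterate avoids the pattern. This requires a diagonalization that interleaves "taking a convergent subsequence" with "letting the window length grow," and care is needed to see it goes through with only the compactness of $\WKL_0$ and no arithmetic comprehension — but since at each finite stage we only need finitely much information about finitely many iterates (each $f^m$ is again an endomorphism by Lemma~\ref{lemma:basic}), a tree argument coding "approximations to $Z$ that keep the first $j$ iterates out of $[X\until c]$ up to precision $j$" should yield an infinite tree, and $\WKL$ produces the path $Z$. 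I expect the bookkeeping here to be the only real work; the invariance and closedness checks are routine given the preliminaries.
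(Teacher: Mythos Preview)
Your approach is correct in spirit but takes a longer route than necessary, and one step does not obviously go through in $\WKL_0$.

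First, a confusion in the setup: your set $E$ (the orbit closure of $X$) cannot be a \emph{proper} subset of a minimal system---in a minimal system the orbit closure of every point is the whole system. You seem to notice this and pivot to the orbit closure of the auxiliary point $Z$, which is the right object. However, the orbit closure of $Z$ is coded by the tree $\{\sigma : (\exists m)\,F^m(Z)\succeq\sigma\}$, which is a priori only $\Sigma^0_1(Z\oplus f)$; it is not clear this tree exists in an arbitrary model of $\WKL_0$, so invoking minimality against it needs justification.

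The paper's proof sidesteps both issues. Instead of constructing $Z$ and then its orbit closure, one defines directly
\[
E \;=\; \{\sigma \in D : (\forall n \le |\sigma|)\,(f^n(\sigma) \not\succeq \tau)\},
\]
where $\tau = X\until{c}$. This tree is computable in $D\oplus f$, it is $f$-invariant (using the standing convention $|f(\sigma)|<|\sigma|$), and $X \notin [E]$. So if $[E]\neq\emptyset$ then $(E,f)$ is a proper subsystem, contradicting minimality; hence $[E]=\emptyset$, and by $\WKL$ the tree $E$ is finite, say with no strings of length $b$. Then every $Y\in[D]$ satisfies $F^k(Y)\succeq\tau$ for some $k<b$, and applying this with $Y = F^n(X)$ contradicts the assumed failure of almost periodicity.

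Notice that the tree you describe in your ``main obstacle'' paragraph---approximations whose first $j$ iterates stay out of $[\tau]$---is essentially this $E$. So once you have written it down there is no need to extract a path $Z$ or form any orbit closure: $E$ itself is the candidate proper subsystem, and minimality applied to it finishes the argument immediately. The paper's use of $\WKL$ is dual to yours: rather than producing a path through an infinite tree, it concludes that a tree with no paths is finite.
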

\begin{proof}
Let $(D, f)$ be a minimal system and take any $X \in [D]$. Assume $X$ is not an almost periodic point. If so  there exists some $\tau \prec X$ such that 
\begin{equation}
\label{e: notap}
(\forall b )(\exists n)(\forall k \le b)F^{n+k}(X) \not \succ \tau.
\end{equation} 

Define $E = \{\sigma \in D \colon (\forall n \le |\sigma|)(f^n(\sigma) \not \succeq \tau)\}$.
As $X \not \in [E]$, we have that  $[E] \subsetneq [D]$. Our assumption that $|f(\sigma)| <|\sigma|$, implies that for all $\sigma \in E$, $f(\sigma) \in E$. Now $(E,f)$ cannot be a system because this would contradict the minimality of $(D,f)$. 
This means that $[E] = \emptyset$. Applying $\WKL$, there exists a $b$, such that $E$ contains no string of length $b$.  Hence for all $Z \in [D]$ there is some $k <b$ with $F^k(Z) \succ \tau$. This contradicts \eqref{e: notap} and  hence our assumption that $X$ is not an almost periodic point is incorrect.
\end{proof}

 While we appealed to Zorn's lemma to construct a minimal subsystem, this is not necessary for systems in Cantor space. The reason is Cantor space contains a computable basis of open sets. This allows us to show that $\ACA_0$ implies that any system contains a minimal subsystem. 
 To find a minimal subsystem simply enumerate the basis and ask in order can any element be removed. In particular, let $\{\sigma_i\}_{i \in \omega}$ enumerate the finite strings. Given a system $(C,f)$ let $C_0 =C$. If $(C_i, f)$ has been defined,  let $C_{i+1}$ be equal to $\{\tau \in C_i \colon (\forall n \le |\tau|)(f^n(\tau) \not \succeq \sigma_i)\}$ if the later set is not finite.  Otherwise let $C_{i+1} = C_i$. It is not difficult to verify that $( \bigcap_i C_i, f)$ is a minimal subsystem of $(C,f)$. This gives us the following result.

\begin{proposition}
\label{prop: ACA}
$\ACA_0$ proves that any system contains an almost periodic point. 
\end{proposition}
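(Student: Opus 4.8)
The plan is to combine the two ingredients that have just been assembled: the construction of a minimal subsystem, and the lemma that every point of a minimal system is almost periodic. First I would carry out the construction sketched informally in the paragraph immediately preceding the proposition, but inside $\ACA_0$ rather than appealing to Zorn's lemma. Fix a system $(C,f)$ and an enumeration $\{\sigma_i\}_{i\in\omega}$ of $\strs$. Define the sequence of trees $C_0 = C$ and, given $C_i$, set
\[
C_{i+1} = \{\tau \in C_i \colon (\forall n \le |\tau|)(f^n(\tau) \not\succeq \sigma_i)\}
\]
if this set is infinite, and $C_{i+1} = C_i$ otherwise. The key observation is that ``$C_{i+1}$ as defined above is infinite'' is an arithmetical predicate in the parameter $C_i \oplus f$ (it asserts the existence of arbitrarily long strings satisfying a computable condition), so by arithmetical comprehension the sequence $\seq{C_i \colon i \in \omega}$ exists, as does $D := \bigcap_i C_i$. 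Note each $C_i$ is a tree (the defining condition is closed under initial segments, using $|f(\sigma)| < |\sigma|$), each is infinite by construction, and $f(\tau) \in C_{i+1}$ whenever $\tau \in C_{i+1}$ since $f^n(f(\tau)) = f^{n+1}(\tau)$ and $|f(\tau)| \le |\tau|$; hence $(C_{i+1}, f)$ is a system, and therefore so is $(D,f)$: it is a tree, $f$ maps it to itself, and $[D] \ne \emptyset$ because $[D] = \bigcap_i [C_i]$ is a nested intersection of nonempty closed sets in Cantor space, which is nonempty by $\WKL$ (available in $\ACA_0$).

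Next I would verify that $(D,f)$ is minimal. Suppose $(D',f)$ is a system with $[D'] \subsetneq [D]$. Pick $X \in [D] \setminus [D']$ and $\sigma_i \prec X$ with $[D'] \cap [\sigma_i] = \emptyset$; more carefully, since $[D']$ is closed and $X \notin [D']$ there is an initial segment $\sigma_i \prec X$ avoided by $[D']$ in the sense that no point of $[D']$ has any $F^n$-iterate extending $\sigma_i$ --- here I would use minimality-flavoured reasoning: if $[D']$ is a subsystem, its points' orbits stay in $[D']$, and if every point of $[D']$ had some iterate extending $\sigma_i$, compactness would give a uniform bound, contradicting $X \in [D]$ only if... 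Actually the clean route is: because $(D',f)$ is a system and $[D'] \subseteq [C_i]$, the set $\{\tau \in D' \colon (\forall n \le |\tau|)(f^n(\tau) \not\succeq \sigma_i)\}$ would, if infinite, itself define a proper subsystem; but the point is simply that $[D'] \subseteq [C_{i+1}]$ fails, forcing $C_{i+1} = C_i$ to have been the ``finite'' case --- and then one shows $X$'s orbit witnesses every string gets extended, contradicting $X \in [D]$. The argument mirrors exactly the one in the preceding lemma's proof. So every $X \in [D]$ is a path in the minimal system $(D,f)$, hence almost periodic in $(D,f)$ by that lemma, hence (since $[D] \subseteq [C]$ and the witnessing bounds transfer verbatim) an almost periodic point of $(C,f)$.

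The main obstacle I anticipate is the verification of minimality of $(D,f)$ carried out entirely within $\ACA_0$ --- in the presence of Zorn's lemma this is immediate, but here one must argue directly that the stage-by-stage ``remove $\sigma_i$ if you still have infinitely many strings'' process really does terminate at a minimal system, i.e. that there is no proper subsystem lurking below $\bigcap_i C_i$. The right way to see this is: if $(D',f)$ with $[D'] \subsetneq [D]$ were a system, choose $i$ with $\sigma_i$ an initial segment of some $X \in [D]\setminus[D']$ that no $F^n$-image of any point of $[D']$ extends (such $i$ exists by compactness of $[D']$ and the definition of ``system'' applied to iterates via Lemma~\ref{lemma:basic}); then $D' \subseteq \{\tau \in C_i : (\forall n\le|\tau|)(f^n(\tau)\not\succeq\sigma_i)\}$, so that set is infinite, so $C_{i+1}$ equals it and $X \notin [C_{i+1}] \supseteq [D]$, contradiction. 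Everything else --- existence of the sequence by arithmetical comprehension, each $(C_i,f)$ being a system, nonemptiness of $[D]$ by $\WKL$ --- is routine, and the almost-periodicity then follows by citing the preceding lemma. Since $\ACA_0$ proves $\WKL_0$, no additional strength is needed.
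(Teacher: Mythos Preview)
Your approach is exactly the paper's: construct the minimal subsystem $D=\bigcap_i C_i$ by the enumerate-and-remove procedure (using arithmetical comprehension to form the sequence), then invoke the preceding lemma that every point of a minimal system is almost periodic. The paper leaves the minimality verification as ``not difficult''; your filled-in argument is correct, though note that the step ``$D' \subseteq \{\tau \in C_i : (\forall n\le|\tau|)(f^n(\tau)\not\succeq\sigma_i)\}$'' should really be phrased as ``that set contains every initial segment of any $Y\in[D']$, hence is infinite'' --- you only know $[D']\subseteq[C_i]$, not $D'\subseteq C_i$ as trees, and the condition ``no $F^n$-image of a point of $[D']$ extends $\sigma_i$'' is simply $[\sigma_i]\cap[D']=\emptyset$ (orbits stay in $[D']$), so no compactness is needed there.
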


\begin{theorem}
\label{thm: ACA minimal}
Over $\msf{WKL_0}$, $\ACA$ is equivalent to statement that every system contains a minimal subsystem.
\end{theorem}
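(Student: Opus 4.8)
The plan is to prove the two implications separately, both over $\WKL_0$. The direction from $\ACA$ to ``every system has a minimal subsystem'' is, since $\ACA_0 = \WKL_0 + \ACA$, exactly what is sketched just before Proposition~\ref{prop: ACA}: given a system $(C,f)$ and an enumeration $\seq{\sigma_i}$ of $\strs$, form the greedy sequence with $C_0 = C$ and $C_{i+1} = \{\tau \in C_i \colon (\forall n \le |\tau|)\, f^n(\tau) \not\succeq \sigma_i\}$ when this set is infinite and $C_{i+1} = C_i$ otherwise, put $M = \bigcap_i C_i$, and check that $(M,f)$ is a minimal subsystem; arithmetical comprehension is used to produce $\seq{C_i}$ and $M$ (each $C_{i+1}$ is read off from a $\Sigma^0_1$ fact about $C_i$), and $\WKL$ to keep each infinite $C_i$ path-nonempty. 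I would simply write this verification out.

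For the reversal I would work over $\RCA_0$ (the conclusion then holds a fortiori over $\WKL_0$). By the standard characterization of arithmetical comprehension (see \cite{Simpson_2009}) it suffices to show that if every system has a minimal subsystem then, for every injection $g \colon \omega \to \omega$, the set $A = \mathrm{ran}(g)$ exists; write $A_s = \{g(t)\colon t \le s\}$, so $A_s \subseteq A_{s+1}$ and $\bigcup_s A_s = A$. The plan is to build a system $(C,f)$, \emph{uniformly computable from $g$} — so that it exists in any model containing $g$ — whose only minimal subsystem $(D,f)$ has the feature that from $D$ (and a path through it) one can compute $A$. Applying the principle to $(C,f)$ then yields such a $D$, hence $A$, hence $\ACA$.

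For the construction I would try one of two related designs. In the first, split points of $\Cs$ into a ``clock'' block and a ``data'' block; let $F$ advance the clock and, when the clock reads $s$, reset the data to agree with $\chi_{A_s}$ on a correspondingly long prefix, so that every orbit converges to a single configuration $p^\ast$ whose data part is $\chi_A$, and populate $[C]$ with $p^\ast$ together with generous finite-stage approximations (extra, not-yet-justified data bits allowed) chosen so that ``$\sigma \in C$'' is decidable from $g$. In the second, take $f$ to code the shift on $\Cs$ and $C$ a computable-from-$g$ tree with $[C]$ shift-invariant whose unique minimal subsystem is the orbit closure of a Toeplitz sequence $x$ built so that $x$ (hence the minimal subsystem) computes $A$, with $[C]$ specified via an explicit computable-from-$g$ set of allowed factors extending those of the stage approximations. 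In either design, once the principle hands back a minimal subsystem $(D,f)$, a path $X \in [D]$ exists since $[D] \ne \emptyset$, the orbit $\seq{F^k(X)\colon k \in \omega}$ exists by Lemma~\ref{lemma:basic}, and one reads $A$ off $D$ and $X$.

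The step I expect to be the main obstacle is reconciling the computability of $C$ from $g$ with the fact that the minimal subsystem, computing $A = \mathrm{ran}(g)$, is not computable from $g$. This is exactly why the first design is delicate: if the funnel configuration $p^\ast$ is topologically attracting, the canonical code $f$ of $F$ must output prefixes of $p^\ast$ on all long tree-prefixes approaching a prefix of $p^\ast$, which would force $f$ to compute $\chi_A$ — impossible computably from $g$. The Toeplitz design avoids this because its minimal subsystem is not attracting and $C$ encodes it the way a computable tree encodes a noncomputable path; but there the delicate point becomes ensuring that this is the \emph{only} minimal subsystem of $(C,f)$. Carrying one of these through — in particular, getting the ``allowed'' data approximations or factors generous enough for $C$ to be computable yet restrictive enough to pin down the minimal subsystem — is the crux. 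Throughout, $\WKL$ is available but does not appear to be needed for the reversal; the theorem is stated over $\WKL_0$ to line up with the forward direction.
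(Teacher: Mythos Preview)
Your forward direction matches the paper's. The reversal, however, is not a proof but a plan with an explicitly unfulfilled ``crux'': you propose two designs and then say that carrying either one through is the hard part. That is a genuine gap. In design~1 you already see why it collapses (an attracting fixed point would force $f$ to compute $\chi_A$). Design~2 is more plausible, but you give no construction of the computable-from-$g$ set of allowed factors, no argument that the resulting $[C]$ is shift-invariant and nonempty, and---most importantly---no proof that the Toeplitz orbit closure is the \emph{unique} minimal subsystem of $(C,f)$. Without that uniqueness, the hypothesis only hands you \emph{some} minimal subsystem, which need not see $A$ at all.

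The paper's reversal avoids all of this by not trying to make a single minimal subsystem compute $A$. Instead it encodes one bit of $\emptyset'$ at a time: for each $n$ it builds a shift system $(C_n,f)$ so that if $n\notin\emptyset'$ the only minimal subsystems are $\{0^\omega\}$ and $\{1^\omega\}$ (both missing $[01]$), while if $n\in\emptyset'$ then $C_n$ collapses at the stage of entry to a single finite periodic orbit, which is itself minimal and meets $[01]$. It then takes the product system $C=\prod_n C_n$ with the columnwise shift. Given any minimal subsystem $(D,f)$ of $(C,f)$, one decides $n\in\emptyset'$ by asking whether $\{X\in D : X^{[n]}\succ 01\}$ is empty. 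This is where $\WKL$ is actually used: emptiness of that $\Pi^0_1$ set is turned into finiteness of the associated tree, giving a $\Sigma^0_1$ description of the complement of $\emptyset'$. So your remark that ``$\WKL$ \ldots\ does not appear to be needed for the reversal'' is at odds with the paper's argument; at minimum you would have to justify it, and neither of your designs does.
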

\begin{proof}
The argument proceeding Proposition~\ref{prop: ACA} shows that $\ACA_0$ proves that every system contains a minimal subsystem. To show the other direction we will work over $\WKL_0$ as our base system. In order to simplify the exposition of this proof, we will work with $\Pi^0_1$ classes of reals in Cantor space as opposed to trees in $2^{<\omega}$.

First we will show how to encode a single bit of $\zj$ into a system. 
Let $f$ be the left-shift. Fix $n$, we will define a $\Pi^0_1$ class $C$ such that given any minimal subsystem $(D,f)$ of $(C,f)$ the set $[01] \cap D$ is empty if and only if $n \not \in \zj$.
In particular, if $n \not \in \zj$, then 
\[C = \{0^i1^\omega \colon i \in \omega\} \cup \{1^i0^\omega \colon i \in \omega\}.\]
Observe that in this case, the only minimal subsystems of $(C,f)$ will be $(\{0^\omega\}, f)$ and $(\{1^\omega\}, f)$.

Let $S_i = \{F^n((0^i1^i)^\omega) : n \in \omega\}$.  Each $S_i$ is a minimal system with $2\cdot i$ elements. For example, 
\[S_2 =\{ (0011)^\omega, 011(0011)^\omega, 11(0011)^\omega, 1(0011)^\omega\}.\] 
If $n \in \zj$, then we will define $C$ to be equal to $S_i$ for some $i$ compatible with our definition of $C$ at the stage $n$ enters $\zj$. 
Formally, let $t$ be $\infty$ if $n\not \in \zj$ and let $t$ be the  least $s$ such that  that $n \in \zj[s]$ otherwise.
Let $E_s =\{X  \in 2^\omega \colon (\exists l \le s)(0^l1^{s-l} \prec X \vee  1^l0^{s-l} \prec X\}$.
 Define 
\[C = \begin{cases} \bigcap_s E_s & t=\infty \\
S_i & t =i <\infty.
\end{cases}\]
Observe that $S_i \subseteq \bigcap_{s< i} E_s$. Hence $C$ is a $\Pi^0_1$ class.
Let $(D,f)$ be a minimal subsystem of $(C,f)$. To determine  if $n$ is in $\zj$ wait until a stage $s$ such that either $n  \in \zj$ or $[01] \cap D[s] = \emptyset$ (the existence of such an $s$ when $n \not \in \zj$ requires $\WKL$). 

In order  to code all elements of $\zj$, we use the uniformity in the definition above to build a product system as follows. 
For all $n$, let $C_n$ be the set defined by the above construction. 
Let $C = \Pi_n C_n$ (i.e.\ $X \in C$ if and only if for all $n$, $X^{[n]} \in C_n$ where  $X^{[n]}$ denotes the $n$th column of $X$). Let $f$ be the mapping produced by applying the left-shift to each column.  Now  if $(D, f)$ is a minimal subsystem of $(C,f)$  then we have that $n \not \in \zj$ if and only if the set
$\{X \in D | X^{[n]} \succ [01]\}$ is empty. Using $\WKL_0$,  this set is empty if and only if  the associated tree is finite and we have provided a $\Sigma^0_1$ definition of the complement of $\zj$. 
\end{proof}

\section{Separating $\msf{AP}$ from $\msf{WKL}$}
\label{sect: AP WKL sep}

\noindent We have seen that $\ACA_0$ proves $\msf{AP}$. Further $\RCA_0 +\msf{AP}$ proves $\WKL$ because any almost periodic point is a recurrent point.
In this section we will separate  $\msf{AP}$ from $\WKL$. 
We will show that there is a model of $\WKL$ that is not a model $\RCA_0 +\msf{AP}$.  The natural numbers in this model will be the true natural numbers and so  we will work with full induction. We will also regard our closed sets  as  $\Pi^0_1$ classes, as this simplifies the exposition.

The key to the separation is the following technical lemma. Let $(C,f)$ be a system. A point $X\in C$ is called a  \emph{periodic point} of $(C,f)$ if for some $n$, $F^n(X) =X$. Let $Orb(X)$ be the orbit of $X$. Note that if $X$ is a periodic point then $Orb(X)$ is a finite set.
For a finite string $\sigma$ we denote by $\sigma^n$, the string obtained by repeating $\sigma$ $n$ times and we denote by $\sigma^\omega$, the infinite sequence $\bigcup_n \sigma^n$.
\begin{lemma} 
\label{lemma: no closed set}
Let $f$ be the left-shift on Cantor space. Let $P \subseteq \Cs$ be a $\Pi^0_1$ class. There is a $\Pi^0_1$ class $C$, computable uniformly in an index for $P$ such that $(C,f)$ is a system and either:
\begin{enumerate}
\item $C \cap P = \emptyset$; or \label{propc1}
\item There is a non-empty $\Pi^0_1$ class $\widehat{P} \subseteq P$ with the property that no element of $\widehat{P}$ is an almost periodic point of $(C,f)$. \label{propc2}
\end{enumerate}
\end{lemma}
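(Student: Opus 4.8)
We are given an arbitrary $\Pi^0_1$ class $P \subseteq \Cs$ and the left-shift $f$, and we must build a $\Pi^0_1$ class $C$ (uniformly computable from an index for $P$) so that $(C,f)$ is a system and either $C \cap P = \emptyset$ or there is a non-empty $\Pi^0_1$ subclass $\widehat P \subseteq P$ none of whose members is almost periodic in $(C,f)$. The strategy I would pursue is a finite-injury-style stage-by-stage construction of $C$ as a shrinking sequence of clopen sets (equivalently, a $\Pi^0_1$-approximated tree), driven by the approximation to $P$. The governing idea: since $f$ is the left-shift, the orbit of $X$ visits the clopen set $[\sigma]$ at time $n$ exactly when $\sigma \prec X^{(n)}$, i.e.\ $\sigma$ occurs as a factor of $X$ starting at position $n$. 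So "$X$ is not almost periodic" means there is a string $\sigma \prec X$ which occurs in $X$ with unbounded gaps. I would therefore aim to arrange that every $X$ that survives into $C \cap P$ contains a designated prefix that recurs only along a very sparse (unbounded-gap) set of positions, while still keeping $C$ a legitimate system — which, crucially, forces us to keep enough of the orbit of each point inside $C$ that $C$ cannot simply collapse.

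**Key steps, in order.** First I would set up the framework: present $C$ as $\bigcap_s [U_s]$ for a computable sequence of finite sets $U_s$ of strings with $[U_{s+1}] \subseteq [U_s]$, and note that to guarantee $(C,f)$ is a system it suffices (as in the $\msf{RP}$ proof, by the same "no $[U_s]$ can be deleted from $[C]$" argument) to maintain at each stage a finite bound $m_s$ with $\bigcup_{n < m_s} F^{-n}([U_s]) \supseteq \Cs$ — here using $[C]=\Cs$ has been replaced by the ambient space, so in fact I want $C$ to be built so that it is $f$-invariant and $[C] \ne \emptyset$ outright, which is automatic if $C$ contains a shift-invariant nonempty set such as $\{0^\omega\}$ together with whatever of $P$ survives. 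Second, I would run the $P$-approximation: at stage $s$ we see $P[s]$, a clopen over-approximation of $P$. While $P[s]$ still looks non-empty, we try to "thin" it: we pick a witness string and we remove from the current $U_s$ those occurrences of the witness string that would make the designated prefix recur too densely, pushing the next legal occurrence out past the current bound — this is exactly the maneuver that makes surviving points fail almost periodicity, and it is safe for the system condition because removing occurrences of a factor only affects finitely many shifts, so we can recompute a new bound $m_{s+1}$. Third, if at some stage $P[s] = \emptyset$ permanently (or $P$ turns out empty in the model), we land in case \eqref{propc1}: we simply let the construction converge with $C \cap P = \emptyset$, which is fine because $C$ will still be a nonempty system (it contains $0^\omega$). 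Fourth, in the remaining case I would identify $\widehat P$ as the $\Pi^0_1$ class of those $X \in P$ that survive all the thinning — i.e.\ $X$ whose designated initial segment, by construction, recurs in $X$ only at the sparse set of positions the construction permitted — and verify $\widehat P \ne \emptyset$ (because at no finite stage did $P$ get emptied, so the associated tree is infinite, so $\WKL$ — available since we work with the true $\omega$ and full induction — gives a path) and that every such $X$ is non-almost-periodic for the reason above.

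**The main obstacle.** The delicate point is reconciling two pulls: to kill almost periodicity we must force a prefix of $X$ to recur only with unbounded gaps, but to keep $(C,f)$ a system we must keep the whole forward orbit of every surviving $X$ inside $C$, and the forward orbit of $X$ under the left-shift visits *all* the tails $X^{(n)}$, so we cannot just throw away clopen neighborhoods wholesale — whatever we remove at some shift must reappear as something legal at nearby shifts. The resolution I expect to need is to choose the witness/prefix structure so that the "bad recurrences" we excise are genuinely optional — the surviving points can route their orbits through a fixed safe region (around $0^\omega$, or around a fixed periodic orbit $\sigma^\omega$) in between sparse recurrences — so that deleting a dense recurrence never deletes a clopen set outright but only reshuffles finitely many shifts, keeping the covering bound $m_s$ finite and computable. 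Getting this bookkeeping exactly right — in particular, proving that the deleted strings really do reappear under small shifts so that no $[U_s]$ is fully removed, and simultaneously that the surviving $\widehat P$ is nonempty and uniformly $\Pi^0_1$ — is where the real work of the proof lies, and I would expect the author's construction to encode precisely such a safe-region/sparse-recurrence trade-off.
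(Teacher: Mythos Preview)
Your proposal has a genuine gap: the plan to ``thin'' $C$ so that surviving points of $C\cap P$ fail almost periodicity cannot work as described, because for the left-shift, whether $X$ is almost periodic is an \emph{intrinsic} property of the sequence $X$ --- it depends only on the factor structure of $X$, not on the ambient closed set $C$ (provided $X\in C$ and $C$ is shift-invariant, which it must be for $(C,f)$ to be a system). Shrinking $C$ can only remove points, not change the recurrence behaviour of the points that remain. Moreover, any nonempty shift-invariant closed $C$ necessarily contains almost periodic points (those in any minimal subsystem), so you cannot hope to make $C$ itself free of them; the task is to arrange that $P$ misses them, and your construction gives no mechanism for that. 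Your ``remove dense occurrences of the witness string from $U_s$'' step is therefore not doing what you need: in a subshift, forbidding a pattern at one position forbids it at all positions, so you cannot selectively delete ``dense'' occurrences while retaining ``sparse'' ones.

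The paper's argument is organised around a completely different dichotomy, and $C$ is far simpler than what you envisage: it is either all of $2^\omega$ or a single finite periodic orbit. One enumerates the periodic points $X_i=\sigma^\omega$ and waits to see whether some $\mathrm{Orb}(X_i)$ is disjoint from $P$; if this ever happens, set $C=\mathrm{Orb}(X_i)$ and case~(i) holds trivially. Otherwise $C=2^\omega$ and \emph{every} periodic orbit meets $P$. This is the real leverage: one now builds, by induction, strings $\sigma_1\prec\sigma_2\prec\cdots$ so that $1^{i+1}\sigma_{i+1}$ is an initial segment of a computable non-almost-periodic sequence (hence $[1^{i+1}\sigma_{i+1}]\cap P=\emptyset$, using the harmless reduction to the case where every computable point of $P$ is almost periodic), while $1^i$ occurs in $\sigma_{i+1}$ but $1^{i+1}$ does not. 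Since each periodic orbit $\mathrm{Orb}((\sigma_i)^\omega)$ meets $P$, one extracts points $X_i\in P$ with $X_i(0)=0$ and $1^k$ appearing in $X_i\!\upharpoonright c_k$ for a computable sequence $c_k$. Any accumulation point lies in $P$, and the $\Pi^0_1$ class $\widehat P=\{X\in P: X(0)=0\ \wedge\ (\forall k)\,1^k\text{ is a substring of }X\!\upharpoonright c_k\}$ is nonempty and contains no almost periodic point (the prefix $0$ has unbounded return times). The idea you were missing is precisely this use of periodic orbits to force structure onto $P$.
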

\begin{proof}
The definition of $C$ is simple. Let $\{X_i\}_{i\in \omega}$ be an enumeration of the periodic points in $(2^\omega, f)$. Such an enumeration exists because any periodic point is of the form $\sigma^\omega$ for some finite string $\sigma$. We let $C=2^{\omega}$ unless at some least stage $s$ we have $Orb(X_i) \cap P = \emptyset$ for some $i <s$. If so we let $C = Orb(X_i)$ for the least $i$ for which this holds at stage $s$. The definition of $C$ is uniform because we can refine $C$ to $Orb(X_i)$ at any point.

If $C= Orb(X_i)$ for some periodic point $X_i$, then $C \cap P = \emptyset$ and condition \eqref{propc1} is meet. Hence we will consider the case that $C =2^\omega$. 
%
%
%
If there is some computable point $X \in P$ such that $X$ is not almost periodic, then condition \eqref{propc2} holds by defining $\widehat{P} = \{X\}$. Hence we will assume that any computable point in $P$ is almost periodic. 

We  inductively define a sequence of finite strings $\sigma_1, \sigma_2, \ldots.$  The strings will have the following properties.
If $i<j$ then $\sigma_i \preceq \sigma_j$. For all $i$, $1^i$ is a substring of $\sigma_{i+1}$ but $1^{i+1}$ is not. The string $\sigma_1= 0^n$ for some $n >0$.

\begin{enumerate}
\item The sequence $10^\omega$ is computable and not almost periodic. Hence $10^\omega \not \in P$ and so  there exists some $n_1 >0$ such that $[1 0^{n_1}] \cap P= \emptyset$. Let $\sigma_1 = 0^{n_1}$.
\item The sequence $11(\sigma_1 1)^\omega$ is computable and not almost periodic (the subsequence $11$ only occurs once). Hence $11(\sigma_1 1)^\omega\not \in P$, and so there exists some $n_2>0$ such that $[11(\sigma_1 1)^{n_2} ] \cap P= \emptyset$.
Let $\sigma_2 = (\sigma_1 1)^{n_2}$.
 \item Similarly  $111(\sigma_2 \sigma_1 11)^\omega \not \in P$, and so there exists some $n_3$ such that $[111(\sigma_2 \sigma_1 11)^{n_3} ] \cap P= \emptyset$.
Let $\sigma_3 = (\sigma_2\sigma_1 11)^{n_3}$.
\item In general we define $\sigma_{i+1} = (\sigma_i\sigma_{i-1}\ldots \sigma_1 1^i)^{n_{i}}$ such that
\[[1^{i+1}(\sigma_i\sigma_{i-1}\ldots \sigma_1 1^i)^{n_{i}}] \cap P= \emptyset.\]
\end{enumerate}
%
%
%
%
Consider the periodic systems generated by 
$(\sigma_i)^\omega$. Because $C = 2^\omega$,  for all $i$, there is some $X_i \in Orb((\sigma_i)^\omega) \cap P$.

\begin{claim}For all $i$, $X_i(0)=0$.  
\end{claim}
\begin{proof}
Take any $X_i$.
 Let $k \in \omega$ be the largest number such that $1^k$ is an initial segment of $X_i$. First $k < i$ because any substring of $1$'s in $(\sigma_i)^\omega$ has length less than $i$. Further, by construction if $1^k0$ forms an initial sequence of $X_i$ then $1^k\sigma_k$ forms an initial sequence of $X_i$, but $\sigma_k$ was choosen so that $[1^k\sigma_k] \cap P= \emptyset$. Note here we are using the fact that if $i<j$ then $\sigma_i \preceq \sigma_j$. Hence $X_i(0) =0$.
\end{proof}
\begin{claim} Fix $k$. Let $c_k =2k + \sum_{s=1}^k|\sigma_s|$. Then for all $i > k$, $X_i\until{c_k}$  contains  $1^k$ as a substring.
\end{claim}
\begin{proof}
 Let $\tau = \sigma_k \ldots \sigma_1$. We will show by induction that for $i > k$, $\sigma_i$ is a string of the form  $\tau1^{n_1}\tau1^{n_2}\tau1^{n_3} \ldots \tau1^{n_l}$ where each $n_j \ge k$ for $j \in \{1, \ldots, l\}$. 
First $\sigma_{k+1} = (\sigma_{k}\sigma_{k-1}\ldots \sigma_1 1^k)^{n_{k}}
= (\tau1^k)^{n_{k}}$ and  is clearly of this form. Fix $i \ge k+1$ and assume this holds for all $j \in \{k+1, k+2, \ldots, i\}$. Then
\[\sigma_{i+1} = (\sigma_i\sigma_{i-1}\ldots \sigma_1 1^i)^{n_{i}}
=(\sigma_i\sigma_{i-1}\ldots \sigma_{k+1}\tau 1^i)^{n_{i}}\]
and so has the desired property by induction. As $X_i$ is a left-shift of $(\sigma_i)^\omega$ and $c_k = |\tau| +2k$, the claim holds.
\end{proof}

Let $X$ be an accumulation point of $\{X_i \colon i \in \omega\}$. Hence $X$ is an element of $P$ as $P$ is closed.
The sequence $X$ has the property that 
$X(0)=0$ and for all $k$, the initial segment $X\until{c_k}$ contains a subsequence of $1^k$. Observe that the sequence $\{c_i\}$ is computable. Now define $\widehat{P} \subseteq P$ to be the following $\Pi^0_1$ class
\[\{ X \in P \colon  X(0)=0 \wedge (\forall k) (1^k\mbox{ is a substring of }X\until{c_k})\}.\]
If all the assumptions are meet until this point, $\widehat{P}$ is non-empty and  no element of $\widehat{P}$ is an almost periodic point. Hence condition \eqref{propc2} is met.
\end{proof}

In the proof of the following theorem we will make use of the fact that if $P\subseteq \Cs$ is a $\Pi^0_1$ class and $f:\Cs \rightarrow \Cs$ is a total computable function, then  
both $f(P)$ and $f^{-1}(P)$ are $\Pi^0_1$ classes.

\begin{theorem}
\label{thm: WKL does not imply AP}
$\WKL_0$ does not prove $\msf{AP}$.
\end{theorem}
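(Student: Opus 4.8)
The plan is to build a model of $\WKL_0$ in which $\msf{AP}$ fails, by iterating Lemma~\ref{lemma: no closed set} along an $\omega$-length construction inside a suitable $\omega$-model (with the true naturals, so full induction holds and $\WKL_0$-ness reduces to the second-order part being a Scott set / closed under the low basis theorem and jump-free completeness operations). The obstacle Lemma~\ref{lemma: no closed set} is designed to overcome is this: a single system $(C,f)$ can be ``diagonalized'' so that \emph{either} $C$ misses a given $\Pi^0_1$ class $P$ entirely, \emph{or} it admits a nonempty $\Pi^0_1$ subclass $\widehat P \subseteq P$ none of whose members is almost periodic. So the strategy is to start from some system $(C_0, f)$ with $f$ the left-shift (e.g.\ $C_0 = 2^\omega$ via the construction in the lemma, relative to a trivial $P$), and then, working in a countable ideal of sets, successively shrink it: at stage $s$ we are handed a candidate system $(C_s, f)$ and an enumeration of all $\Pi^0_1$ classes $P$ that have appeared so far as purported ``sets of almost periodic points''; we apply the lemma (relativized) to kill each such $P$ either by disjointness or by carving out a witness subclass with no almost periodic point.

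\textbf{Key steps, in order.} First I would fix the formal setting: take $\mathcal{M}$ to have first-order part $\omega$, so I only need to exhibit a countable family $\mathcal{S}$ of subsets of $\omega$ that is a Turing ideal, closed under $\WKL$ (every infinite tree in $\mathcal{S}$ has a path in $\mathcal{S}$), but such that no system coded in $\mathcal{S}$ has an almost periodic point in $\mathcal{S}$. Second, I would set up a bookkeeping enumeration of all pairs (system $(C,f)$ coded by a set already in the ideal, candidate real $X$ in the ideal): at each step we must ensure $X$ is \emph{not} an almost periodic point of $(C,f)$, or arrange that $(C,f)$ is not actually a system, or that $X\notin [C]$. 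Third — the heart — I would use Lemma~\ref{lemma: no closed set} as the injury-free module: given the current approximation, relativize the lemma to the (low, by the low basis theorem applied inside the ideal) oracle built so far; the lemma hands back a refined $\Pi^0_1$ class $C$ that is still a system, and in case~\eqref{propc2} a nonempty $\widehat P$ of non-almost-periodic points — I would then \emph{replace} the ambient closed set by intersecting with preimages so that the system's recurrent/limit behavior is forced into $\widehat P$, guaranteeing any path of the system that the model can see fails almost periodicity. Fourth, I would take unions/limits to get the final $C$ for each system, verify (using full induction, available since the naturals are standard) that $(C,f)$ is genuinely a system in the sense of Definition~\ref{def: system2}, and close the ideal under joins, Turing reducibility, and $\WKL$ using the low basis theorem — this is what keeps us inside a model of $\WKL_0$. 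Finally, I would check that $\msf{AP}$ fails: any almost periodic point of any system in the model would, by the construction at the stage that system (and that point) were considered, have been diagonalized against.

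\textbf{Main obstacle.} The delicate point is the \emph{simultaneous} satisfaction of ``$(C,f)$ remains a system'' and ``its only available paths are non-almost-periodic,'' across an infinite iteration, while keeping everything inside a $\WKL_0$-model. Lemma~\ref{lemma: no closed set} gives one step of this, but the conflict is that closing under $\WKL$ (low basis theorem) adds new paths to the trees in the ideal, and one must ensure those newly added paths don't resurrect almost periodicity for some already-considered system. The resolution I expect is that Lemma~\ref{lemma: no closed set}'s $\widehat P$ is defined by a $\Pi^0_1$ condition (the ``$1^k$ occurs as a substring of $X\!\until{c_k}$'' clause) that is robust: \emph{every} path through $\widehat P$, whether or not it was in the ideal when $\widehat P$ was built, fails to be almost periodic, because the growing runs of $1$'s are forced by the tree itself and the complementary ``forbidden'' cylinders $[1^{i+1}(\sigma_i\cdots\sigma_1 1^i)^{n_i}]$ are avoided uniformly. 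So the argument should go through by being careful that the diagonalization is against the \emph{tree}, not against a particular enumeration of its paths — and then a standard low-basis Scott-set construction, interleaving the diagonalization steps with $\WKL$-closure steps, produces the desired $\mathcal{S}$.
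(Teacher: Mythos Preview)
Your proposal has a genuine gap: you have misidentified what is being diagonalized against, and as a result the bookkeeping you describe cannot work.

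You propose to enumerate pairs $(\text{system }(C,f),\ \text{candidate point }X)$ already in the ideal and ``ensure $X$ is not an almost periodic point of $(C,f)$.'' But once $(C,f)$ and $X$ are both in the ideal, whether $X$ is almost periodic for $(C,f)$ is a fixed fact; there is nothing left to ensure. Likewise, your talk of ``successively shrinking'' or ``replacing'' the system makes no sense here: Lemma~\ref{lemma: no closed set} \emph{constructs} a system from a given $\Pi^0_1$ class $P$; it does not modify a pre-existing system, and you cannot retroactively alter a system already coded in your ideal. Your worry that closing under $\WKL$ might ``resurrect'' almost periodicity is a symptom of this confusion: in a construction that adds paths one at a time, you would indeed have to anticipate every system that will ever appear, which is circular.

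The paper's argument avoids all of this by reversing the order of quantifiers. First, it builds a \emph{single computable} product system $(C,g)=\prod_e(C_e,f)$, where $C_e$ is the output of Lemma~\ref{lemma: no closed set} applied to the $e$-th $\Pi^0_1$ class $Q_e$ (using the projection $\pi_e(Q_e)$). This one system lives in every $\omega$-model. Second, it diagonalizes not against points or systems but against \emph{Turing functionals}: starting from a nonempty $\Pi^0_1$ class $P_0$ of sets of $\PA$ degree, at stage $s$ one shrinks to $P_{s+1}\subseteq P_s$ so that $\Phi_s$ fails to compute an almost periodic point of $(C,g)$ from any $X\in P_{s+1}$. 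The trichotomy is: either $\Phi_s$ is not total on $P_s$; or $Q=\Phi_s(P_s)$ has index $e$ with $\pi_e(Q)\cap C_e=\emptyset$ (so $\Phi_s^X\notin C$); or Lemma~\ref{lemma: no closed set} supplies a nonempty $\widehat Q\subseteq Q$ with no almost periodic elements, and one takes $P_{s+1}=\{X\in P_s:\Phi_s^X\in\widehat Q\}$. Any $X\in\bigcap_s P_s$ is then of $\PA$ degree but computes no almost periodic point of $(C,g)$, so the downward closure of $\{X\}$ models $\WKL_0$ but not $\msf{AP}$. The two ideas you are missing are the product system (so that a single $(C,g)$ suffices) and the diagonalization against functionals via nested $\Pi^0_1$ classes (so that one $\PA$ degree suffices, and no iterated Scott-set closure is needed).
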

\begin{proof}
Let $f$ be the left-shift. Let $\{Q_i\}_{i\in \omega}$ be a enumeration of all $\Pi^0_1$ classes. It follows from the uniformity of Lemma~\ref{lemma: no closed set}, that we can build a system
\[ (C,g) = \prod_{e\in \omega}(C_e, f)\]
such that if $Q$ is the $e$\textsuperscript{th}  $\Pi^0_1$ class  then either
 \begin{enumerate}
 \item $\pi_e(Q) \cap C_e = \emptyset$ or
 \item There is a non-empty $\Pi^0_1$ class $\widehat{Q} \subseteq Q$ such that no element of $\pi_e(\widehat{Q})$ is almost periodic,
 \end{enumerate}
   where $\pi_e$ is the projection on the $e$\textsuperscript{th} coordinate (see Theorem~\ref{thm: ACA minimal} for an example of how to encode such a product system). While Lemma~\ref{lemma: no closed set} guarantees the existence of a non-empty  $\Pi^0_1$ subset of $\pi_e({Q})$, no element of which is almost periodic, this can be pulled-back along $\pi_e$ to obtain $\hat{Q}$.
 We will show that there is a set of $\PA$ degree that does not compute an almost periodic point of $(C,f)$.

\subsection*{Construction} At stage $0$, let $P_0$ be  a non-empty $\Pi^0_1$ class of sets of $\PA$ degree. At stage $s+1$, let 
 $\Phi_s$ be the $s$\textsuperscript{th}  Turing functional. If for some $n$ the set
 $\{ X\in P_s \colon \Phi_s^X(n) \uparrow\}$ is not empty, then let $P_{s+1}$ be this set for the least such $n$. 
 
 Otherwise, we have that  $\Phi_s$ is total on all elements of $P_s$. 
 Let $Q = \Phi_s(P_s)$. Now $Q$ is a $\Pi^0_1$ class because there is a total functional that agrees with $\Phi_s$ on the elements of $P_s$. Let $e$ be an index for $Q$ as a $\Pi^0_1$ class. 
 There are two possible outcomes. First  $\pi_e(Q) \cap C_e = \emptyset$ in which case let $P_{s+1}= P_s$ and note that no element of $P_{s+1}$ computes an element of $C$ via $\Phi_s$ let alone an almost periodic element.
The other possible outcome is that  there is some non-empty $\hat{Q} \subseteq Q$ such that no element of $\pi_e(\hat{Q})$ is almost periodic in $C_e$ (and hence no element of $\hat{Q}$ is almost periodic in $C$). For this outcome let 
$P_{s+1} = \{X \in P_s \colon \Phi_s^X \in \hat{Q}\}$. In this case,  $P_{s+1}$ is a non-empty $\Pi^0_1$ class, no element of which computes an almost periodic point in $C$ via $\Phi_s$.

By compactness there is some  $X \in \bigcap_i P_i$. This set $X$ is of $\PA$ degree and $X$ does not compute an almost periodic point of $(C,g)$.  Now it is standard result that there is a model of $\WKL_0$, such that all sets in this model are Turing below $X$. This model does not contain an almost periodic point for the system $(C,g)$ and shows that $\WKL_0$ does not imply $\msf{AP}$. 
\end{proof}

\section{Separating $\msf{ACA}$ from $\msf{AP}$}
\label{sect: ACA AP}

\noindent In this section, we will show that there exists a model of $\msf{RCA_0} +\msf{AP}$ that is not a model of $\msf{ACA}$. To achieve this, we will prove that every topological system on Cantor space has an almost periodic point that is low relative to the system. Because the main theorem of this section is a separation result,  we could make use  of full induction. However, we will restrict ourselves to $I\Sigma_1$ induction so that we can make use of these results in Section~\ref{sect: conservation}.

The objective is to construct an almost periodic point of a system while forcing the jump. Let $(C,f)$ be a system and let $U$ be a c.e.\ set of strings. If there is a subsystem  $(D,f)$ of $(C,f)$ such that 
$[D] \cap [U]= \emptyset$, then we can replace our original system with $(D,f)$. Any almost periodic point in $(D,f)$ is an almost periodic point of $(C,f)$ and we know that such a point cannot meet $U$. 

If we cannot find such a subsystem, then we will show that for some $b$ for all $X \in [C]$ there exists some 
$k<b$ with $F^k(X) \in [U]$. We will use this fact to build a new system $(D,g)$ such that 
$[D] \subseteq [C] \cap [U]$ and for all $X \in [C]$, $G(X) = F^k(X)$ for some $k <b$. We will show that this gives us a certain recurrence  property that allows us to build an almost period point that meets $U$.

\begin{definition}\
\begin{enumerate}
\item Let $f,g:2^{<\omega} \rightarrow 2^{<\omega}$ encode continuous transformations of $\Cs$.   Call  $g$  a \textit{piece-wise combination of iterates of }$f$ if for some $l, b$ there is a function $j:\{0,1\}^l \rightarrow \{1, \ldots, b\}$ such that for all $\sigma$ with $|\sigma| \ge l$, 
$g(\sigma) = f(\sigma, j(\sigma\until{l}))$.
\item Let $(C,f), (D,g)$ be systems. We say that  
 $(D,g)$ \emph{refines} $(C,f)$, written  $(D,g) \le (C,f)$ if:
 \begin{enumerate}
 \item $D \subseteq C$.
 \item  $g$  is a {piece-wise combination of iterates of }$f$.
 \end{enumerate}
\end{enumerate}
\end{definition}

Clearly if $(D,f)$ is a subsystem of $(C,f)$ then $(D,f) \le (C,f)$. 

\begin{lemma}$(\WKL_0)$
\label{lemma: nbh ap}
Let $(C,f), (D,g)$ be systems such that $(D, g) \le (C,f)$. If $X \in [D]$, then 
$(\exists b)(\forall n)(\exists k \le b) F^{n+k}(X) \in [D]$. 
\end{lemma}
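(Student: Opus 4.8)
The plan is to extract from the refinement hypothesis an exact ``speed bound'' relating iterates of $G$ to iterates of $F$ along the orbit of $X$, and then convert that bound into the asserted recurrence statement by an elementary counting argument.

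First I would unpack $(D,g)\le(C,f)$: by the definition of refinement there are $l,b$ and a function $j\colon\{0,1\}^l\to\{1,\dots,b\}$ with $g(\sigma)=f(\sigma,j(\sigma\until{l}))=f^{\,j(\sigma\until{l})}(\sigma)$ whenever $|\sigma|\ge l$, and $D\subseteq C$. Passing to limits, for every $Y\in\Cs$ one gets $G(Y)=F^{\,j(Y\until{l})}(Y)$, since $g(Y\until{m})=f^{\,j(Y\until{l})}(Y\until{m})$ for all $m\ge l$ and the limit is insensitive to the finitely many $m<l$. Thus a single application of $G$ advances along the $F$-orbit by an amount lying between $1$ and $b$, depending only on the first $l$ bits of the current point.

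Next I would iterate this. Using Lemma~\ref{lemma:basic} — specifically the composition law $F(F^k(Y))=F^{k+1}(Y)$, extended to $F^a(F^{c}(Y))=F^{a+c}(Y)$ by a short $\Sigma^0_1$ induction on $a$ — define by primitive recursion the sequence
\[
k_0=0,\qquad k_{m+1}=k_m+j\!\left(F^{k_m}(X)\until{l}\right);
\]
this sequence $\seq{k_m}$ exists by recursive comprehension, and a $\Sigma^0_1$ induction on $m$ (legitimate under the standing $I\Sigma_1$ of this section) shows $G^m(X)=F^{k_m}(X)$ for all $m$: the step is $G^{m+1}(X)=G(G^m(X))=F^{\,j(F^{k_m}(X)\until{l})}\!\left(F^{k_m}(X)\right)=F^{k_{m+1}}(X)$. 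Since each increment $k_{m+1}-k_m$ lies in $\{1,\dots,b\}$, the $k_m$ are strictly increasing with $k_m\ge m$, hence cofinal in $\omega$. Moreover $X\in[D]$ and $(D,g^m)$ is again a system by Lemma~\ref{lemma:basic}, so $G^m(X)\in[D]$, i.e.\ $F^{k_m}(X)\in[D]$ for every $m$.

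Finally I would do the counting. Fix $n$, and let $m$ be least with $k_m\ge n$ (it exists because $k_n\ge n$). If $m=0$ then $n=0$ and $k=0$ works; if $m\ge 1$ then $k_{m-1}<n\le k_m$, so $k:=k_m-n$ satisfies $0\le k\le k_m-k_{m-1}\le b$ and $F^{n+k}(X)=F^{k_m}(X)\in[D]$. Since the bound $b$ is the same for all $n$, this is exactly $(\exists b)(\forall n)(\exists k\le b)\,F^{n+k}(X)\in[D]$. I do not expect a real obstacle: the whole content is that a piece-wise combination of iterates of $F$ moves the orbit forward in steps of size at most $b$. The only point needing care is the bookkeeping in the weak base theory — checking that $\seq{k_m}$ is obtained by $\Sigma^0_1$ recursion and that $G^m(X)=F^{k_m}(X)$ is proved by $\Sigma^0_1$ induction, which in turn forces one first to bootstrap the general composition law $F^a\circ F^c=F^{a+c}$ from the $a=1$ case recorded in Lemma~\ref{lemma:basic}.
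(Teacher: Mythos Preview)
Your argument is correct and rests on the same observation as the paper's proof: the bound $b$ from the piece-wise combination already witnesses the conclusion, because each application of $G$ advances the $F$-orbit by at most $b$ steps while remaining in $[D]$. The paper, however, packages this more economically as a contradiction argument rather than constructing the full sequence $\seq{k_m}$. It assumes some $n$ violates the conclusion for this $b$, takes the least such $l$, observes that $F^l(X)\in[D]$ by minimality (for $l=0$ this is $X\in[D]$; for $l>0$ the window $\{F^l(X),\dots,F^{l+b-1}(X)\}$ meets $[D]$ while $\{F^{l+1}(X),\dots,F^{l+b}(X)\}$ does not), and then applies $G$ \emph{once} to $F^l(X)$ to land back in $[D]$ within the forbidden window --- a contradiction. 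This sidesteps the explicit primitive recursion defining $\seq{k_m}$ and the inductive verification that $G^m(X)=F^{k_m}(X)$; in particular the paper needs only a single instance of the composition law $F^k(F^l(X))=F^{l+k}(X)$ rather than bootstrapping the general $F^a\circ F^c=F^{a+c}$ you flag. Your direct construction is more explicit about why the bound works and arguably more portable, but the paper's least-element argument is lighter on formal bookkeeping in the weak base theory.
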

\begin{proof}
Assume this fails for some $X \in [D]$. Let $b$ witness that $(D, g) \le (C,f)$ i.e.\ $\{1, \ldots, b\}$ is range of the function $j$. Consider the set of $n$ such that  
\[\{F^{n+1}(X), F^{n+2}(X), \ldots, F^{n+b}(X)\} \cap [D]= \emptyset.\]
This set is computable in $X$ and non-empty by assumption. Hence it contains a least element $l$.
By minimality and the fact that $F^0(X) = X \in [D]$ we must have that $F^l(X) \in [D]$ hence for some $k \in \{1, \ldots, b\}$, 
$G(F^l(X))=F^k(F^l(X))$ and so  
$F^{l+k}(X) = G(F^l(X))  \in [D]$ contradicting our assumption.
\end{proof}

%

\begin{lemma}$(\WKL_0)$
\label{lemma: transitive}
The refinement relation is transitive.
\end{lemma}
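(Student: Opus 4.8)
The plan is to compose the two piece-wise representations. Suppose $(E,h) \le (D,g)$ and $(D,g) \le (C,f)$. The inclusion $E \subseteq C$ is immediate from $E \subseteq D \subseteq C$, so the whole content is to verify that $h$ is a piece-wise combination of iterates of $f$. Fix the witnessing data: numbers $l,b$ and a function $j \colon \{0,1\}^l \to \{1,\dots,b\}$ with $g(\rho) = f(\rho, j(\rho\until{l}))$ for every $\rho$ with $|\rho| \ge l$, and numbers $l',b'$ and a function $j' \colon \{0,1\}^{l'} \to \{1,\dots,b'\}$ with $h(\tau) = g(\tau, j'(\tau\until{l'}))$ for every $\tau$ with $|\tau| \ge l'$.

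First I would record the arithmetic of iterates. Using the associative law $f(f(\sigma,a),c) = f(\sigma,a+c)$, provable in $\RCA_0$ exactly as in the proof of Lemma~\ref{lemma:basic}, a short induction on $t$ (up to $b'$) shows that whenever $|g^i(\tau)| \ge l$ holds for every $i < t$, one has $g^t(\tau) = f(\tau, c_t)$ with $c_t = \sum_{i<t} j(g^i(\tau)\until{l})$. In particular, writing $m = j'(\tau\until{l'})$, for any $\tau$ with $|\tau| \ge l'$ such that $|g^i(\tau)| \ge l$ for all $i < m$, we get $h(\tau) = g^m(\tau) = f(\tau,c)$ where $c = \sum_{i<m} j(g^i(\tau)\until{l})$; since $m \le b'$ and each summand lies in $\{1,\dots,b\}$, we have $1 \le c \le bb'$.

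The remaining point is to fix a single length $L$ past which (i) all the intermediate strings $g^i(\tau)$ with $i < b'$ have length $\ge l$, so that the previous paragraph applies, and (ii) the number $c$ depends only on $\tau\until{L}$, so that it can serve as a value of the piece-wise function. For (i) I would iterate the third condition of Definition~\ref{def:endo} for $g$ (available since $(D,g)$ is a system): put $L_0 = \max(l,l')$ and, given $L_i$, let $L_{i+1} \ge L_i$ be a length such that every $\sigma \in \{0,1\}^{L_{i+1}}$ has $|g(\sigma)| \ge L_i$; a $\Sigma^0_1$ induction builds the increasing sequence $L_0 \le \dots \le L_{b'-1}$ and shows, using $g^{i+1}(\sigma) = g^i(g(\sigma))$, that every $\sigma$ with $|\sigma| \ge L_i$ has $|g^i(\sigma)| \ge l$; set $L = L_{b'-1}$. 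For (ii): if $|\tau| \ge L$ then $\tau\until{L} \preceq \tau$, so by order-preservation of $g^i$ (derived from that of $g$ as in Lemma~\ref{lemma:basic}) $g^i(\tau\until{L}) \preceq g^i(\tau)$; since $|g^i(\tau\until{L})| \ge l$ by the choice of $L$, the initial segment $g^i(\tau)\until{l} = g^i(\tau\until{L})\until{l}$ is determined by $\tau\until{L}$, as is $j'(\tau\until{l'})$ since $L \ge l'$. I would then define $J \colon \{0,1\}^L \to \{1,\dots,bb'\}$ by $J(\nu) = \sum_{i < j'(\nu\until{l'})} j(g^i(\nu)\until{l})$, which is well-defined by (i), and conclude $h(\tau) = f(\tau, J(\tau\until{L}))$ for every $\tau$ with $|\tau| \ge L$; thus $h$ is a piece-wise combination of iterates of $f$ with data $(L, bb', J)$, so $(E,h) \le (C,f)$.

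The main obstacle is exactly step (i): a single application of $h$ conceals as many as $bb'$ applications of $f$, and how many it actually conceals is governed by the intermediate strings $g^i(\tau)$ rather than by $\tau$ alone. One must therefore push $L$ far enough --- by iterating the continuity modulus of $g$ finitely (possibly nonstandardly) many times --- that each such intermediate string is already long enough for the $g$-to-$f$ translation to apply and is already determined by the initial segment $\tau\until{L}$ that the piece-wise function is permitted to inspect. Everything else is routine bookkeeping inside $\RCA_0$, so $\WKL_0$ certainly suffices.
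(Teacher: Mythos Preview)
Your proposal is correct and follows essentially the same route as the paper: compose the two piece-wise representations, bound the new exponent by $bb'$, and choose a length $L$ large enough that each intermediate string $g^i(\tau)$ ($i<b'$) has length at least $l$, so the composite exponent is determined by $\tau\until{L}$. The paper simply asserts the existence of such an $l_3$ and that the resulting exponent depends only on $\sigma\until{l_3}$; you spell out both points (iterating the modulus of $g$ and invoking order-preservation), which is a bit more explicit but not a different argument.
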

\begin{proof}
Let $(E,h) \le (D,g) \le (C,f)$. Clearly $[E] \subseteq [C]$. Let $j_1:\{0,1\}^{l_1} \rightarrow \{1, \ldots, b_1\}$ and 
$j_1:\{0,1\}^{l_2} \rightarrow \{1, \ldots, b_2\}$ be such that for all $\sigma$, if $|\sigma| \ge \max\{l_1, l_2\}$ then
\begin{enumerate}
\item $g(\sigma) = f^{j_1(\sigma\until{l_1})}(\sigma)$.
\item  $h(\sigma) = g^{j_2(\sigma\until{l_2})}(\sigma)$.
\end{enumerate}
Let $b_3 = b_1 \cdot b_2$. Let $l_3 >l_2$ be sufficiently large such that for all $\sigma \in \{0,1\}^{l_3}$, for all $n < b_2$, $|g^n(\sigma)| > l_1$.

Take any string  $\sigma$ such that $|\sigma| \ge {l_3}$. Let $m = j_2(\sigma \until{l_2})$. Then 
 $h(\sigma) = g^m(\sigma)$. Further
\begin{align*}
g^m(\sigma) =& g\circ g^{m-1}(\sigma) \\
=& f^{j_1(g^{m-1}(\sigma)\until{l_1})} \circ g^{m-1}(\sigma)\\
=&  f^{j_1(g^{m-1}(\sigma)\until{l_1})} \circ f^{j_1(g^{m-2}(\sigma)\until{l_1})} \circ \ldots 
\circ f^{j_1(g^{0}(\sigma)\until{l_1})}(\sigma)
\end{align*}
Hence $h(\sigma)= f^n(\sigma)$, where $n = \sum_{i=0}^{m-1} j_1(g^{i}(\sigma)\until{l_1})$. Because $m \le b_2$,  we have that $n$ only depends on $\sigma \until{l_3}$. Further $n \le b_3$. Hence $h$ is a piece-wise combination of iterates of $f$ and so
$(E, h) \le (C, f)$.
\end{proof}

\begin{lemma} $(\WKL_0)$
\label{lemma: meet or avoid}
Let $(C,f)$ be a system and $U$ a c.e.\ set. There is a system $(D,g)$ refining $(C,f)$ such that 
either:
\begin{enumerate}
\item \label{out} $[D] \cap [U]= \emptyset$; or
\item \label{in} $[D] \subseteq [U]$.
\end{enumerate}
\end{lemma}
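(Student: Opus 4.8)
The plan is to dovetail an attempt to build a decreasing sequence of subsystems avoiding larger and larger "approximations" to $[U]$, and to show that if this process ever gets stuck then we can instead force ourselves \emph{into} $[U]$ by passing to a piece-wise combination of iterates of $f$. First I would observe the following dichotomy. Either there is a subsystem $(D,f)$ of $(C,f)$ with $[D]\cap[U]=\emptyset$ — in which case we are immediately in case~\eqref{out}, since a subsystem is a refinement — or no such subsystem exists. In the latter case, for \emph{every} system $(C',f)$ with $[C']\subseteq[C]$ we have $[C']\cap[U]\ne\emptyset$; in particular, running the "remove strings that never map into $U$" construction (as in the minimal-subsystem argument of Section~\ref{section min}, but relative to $U$) cannot empty out the tree, so applying $\WKL_0$ there is a bound $b$ such that for every $X\in[C]$ there is some $k<b$ with $F^k(X)\in[U]$. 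This is the key intermediate fact, and it is exactly the phenomenon flagged in the paragraph preceding the definition of refinement.

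Next, using this bound $b$, I would build the refinement $(D,g)$ witnessing case~\eqref{in}. Since $U$ is c.e., for each length $l$ and each $\sigma\in\{0,1\}^l$ one can search for the least $k<b$ such that $f^k(\sigma)$ has been enumerated into $U$ (more precisely, such that $f^k(\sigma)$ extends an element of $U$); by the bound $b$ and compactness, for $l$ large enough this search always succeeds, so we get a function $j:\{0,1\}^l\to\{1,\dots,b\}$ (reserving the value by convention, e.g.\ using $f^0=\mathrm{id}$ is disallowed since the range must be $\{1,\dots,b\}$, so one works with the shifted system or takes the least $k\le b$ with $k\ge 1$ after noting $X\in[U]$ can be absorbed). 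Set $g(\sigma)=f^{j(\sigma\until l)}(\sigma)$ for $|\sigma|\ge l$ and extend arbitrarily below; this $g$ encodes a continuous transformation and is a piece-wise combination of iterates of $f$ by construction. Let $D=\{\sigma\in C:\ \sigma\text{ is compatible with }[U]\text{-membership forced by }g\}$, i.e.\ take $[D]$ to be the $\Pi^0_1$ class $[C]\cap\bigcap_{n}G^{-n}([U])$, which is nonempty since $G$ maps $[C]$ into $[U]$ and $[C]\cap[U]\ne\emptyset$, and $g$ maps $D$ into $D$ by transitivity of "lands in $[U]$ under all iterates of $G$." Then $[D]\subseteq[U]$, giving case~\eqref{in}, and $(D,g)\le(C,f)$.

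The main obstacle I anticipate is the first case distinction itself: deciding, \emph{within $\WKL_0$}, whether a subsystem of $(C,f)$ avoiding $[U]$ exists, and extracting the uniform bound $b$ when it does not. The natural move is to define $E=\{\sigma\in C:\ (\forall n\le|\sigma|)\,f^n(\sigma)\notin[U]_{|\sigma|}\}$ (using a finite stage of the enumeration of $U$) and argue: if $[E]\ne\emptyset$ then $(E,f)$ is a subsystem avoiding $[U]$ (here one uses $|f(\sigma)|<|\sigma|$ to see $E$ is closed under $f$); if $[E]=\emptyset$ then $\WKL_0$ yields a finite level with no nodes, hence the bound $b$. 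Care is needed because $[U]$ is open but only $\Sigma^0_1$, so "$f^n(\sigma)\notin[U]$" is not decidable — this is why one must phrase $E$ using a stage-bounded approximation and then check that the two outcomes ("$[E]$ nonempty" and "$[E]$ empty") behave correctly; the argument is essentially the one already used in the minimal-system lemma of Section~\ref{section min}, so I would model the bookkeeping on that proof. The remaining verifications — that $g$ satisfies Definition~\ref{def:endo}, that $(D,g)$ is a system, and transitivity-style closure of $D$ under $g$ — are routine given Lemma~\ref{lemma: transitive} and Lemma~\ref{lemma:basic}.
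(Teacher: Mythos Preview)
Your approach is essentially the paper's: your tree $E$ is exactly the paper's $D_0$, the dichotomy on whether $[E]$ is empty is the same, and your piece-wise $g$ built from a function $j$ on a fixed level is the same construction. There is, however, one genuine slip in case~\eqref{in} that you should fix.

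You claim that $[D]=[C]\cap\bigcap_n G^{-n}([U])$ is a $\Pi^0_1$ class. It is not: $[U]$ is only $\Sigma^0_1$, and in fact once $G$ maps $[C]$ into $[U]$ your intersection collapses to $[C]\cap[U]$, which need not be closed. The repair is the one the paper uses and which your own compactness remark already points toward: when $E$ dies at some level $s$, work with the \emph{clopen} approximation $U[s]$ rather than $U$. Define $D_1=\{\sigma\in C:\ |\sigma|<s\ \text{or}\ (\exists\tau\in U[s])\ \sigma\succeq\tau\}$; this is a genuine subtree of $C$ with $[D_1]\subseteq[U[s]]\subseteq[U]$, and your $g$ (built from $j$ searching $U[s]$, not $U$) maps $C$ into $D_1$. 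This also dissolves your worry about the range $\{1,\dots,b\}$: choose $l$ large enough that $|f(\sigma)|\ge s$ for all $\sigma\in\{0,1\}^l$; then $f(\sigma)\notin E$, so some $k\in\{1,\dots,s+1\}$ already sends $\sigma$ into $U[s]$, and for $\sigma\notin C$ you may set $j(\sigma)=1$ by fiat. With these adjustments your argument and the paper's coincide.
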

\begin{proof}
Define
\[D_0 = \{ \sigma \in C \colon (\forall n \le |\sigma|)(\forall \tau \in U[|\sigma|])
(f(\sigma,n) \not \succeq \tau)\}.\]
To establish that $D_0$ is a tree, let $\sigma$ and $\sigma'$ be any two strings such that $\sigma \preceq \sigma'$. Assume $\sigma \not \in D_0$. If $\sigma \not \in C$ then because $C$ is a tree $\sigma' \not \in D_0$. If $\sigma \in C$ then for some $n \le |\sigma|$ and $\tau \in U[|\sigma|]$, $f(\sigma, n) \succeq \tau$. Hence $f(\sigma', n) \succeq \tau$ and $\tau \in U[|\sigma'|]$. Thus $\sigma' \not \in D_0$.

\begin{claim}
For all $\sigma \in D_0$, $f(\sigma) \in D_0$.
\end{claim}
\begin{proof}[Proof of claim]
If $f(\sigma) \not \in D_0$, then let $\sigma' =f(\sigma)$. There is some $n \le |\sigma'|$ and $\tau \in U[|\sigma'|]$ such that 
$f(\sigma', n) \succeq \tau$. Hence $f(\sigma, n+1) \succeq \tau \in U[|\sigma|]$. We have that $\sigma \not \in D_0$ because $n+1 \le |\sigma|$ (here we use our assumption that
$|f(\sigma)| <|\sigma|)$.
\end{proof}
This claim establishes that if $D_0$ is infinite, then $(D_0, f)$ refines $(C,f)$, and by the definition of $D_0$ with $n=0$, we have that $[D_0] \cap [U]= \emptyset$.

Now consider the case that $D_0$ is finite. Let $s$ be least such that $D_0$ contains no string of length $s$. Define
\[ D_1 = \{ \sigma \in C \colon (|\sigma| <s) \vee (\exists \tau \in U[s]  ( \sigma \succeq \tau))\}.\]
We will show that $D_1$ is infinite.  Take any $X \in [C]$ and let $\sigma = X\until{s}$. As $\sigma \not \in D_0$, there is a $k \le s$ such that $f^k(\sigma) \succ \tau \in U[s]$. Hence for all $n$, $f^k(X\until{n}) \in D_1$.

Let $l$ be such that if $|\sigma| = l$, then $|f(\sigma)| \ge s$.
Define $j:\{0,1\}^l \rightarrow \{1, \ldots, s+1\}$ by
\[j(\sigma) = \begin{cases}
1 & \sigma \not \in C \\
k& \mbox{where }k \ge 1\mbox{ is  least  such that }(\exists \tau \in U[s] )f(\sigma, k) \succeq \tau\mbox{ if }\sigma \in C.
\end{cases}\]
Note that $j(\sigma)$ is well-defined because for $\sigma \in \{0,1\}^l$ we have that $|f(\sigma)| \ge s$ and so $f(\sigma) \not \in D_0$. Hence for some $k \in \{0, \ldots, s\}$ we have that $f(f(\sigma, k))$ extends some element of $U[s]$.
Define a function $g:2^{<\omega} \rightarrow 2^{<\omega}$ by
\[g(\sigma) = \begin{cases}
\lambda &\mbox{if }|\sigma| < l\\
f(\sigma, j(\sigma \until{l})) & \mbox{otherwise.} \end{cases}\]

It follows from the definition of $j$ and $D_1$, that if  $\sigma \in [C]$, then $g(\sigma) \in [D_1]$. Hence $(D_1, g)$ is a system. Clearly, $g$ is a piece-wise combination of iterates of $f$ and hence $(D_1,g) \le (C,f)$. Finally $[D_1] \subseteq U[s]$. 
\end{proof}

The proof given of the proceeding lemma provides some more information that we will make use of in Section~\ref{sect: conservation}. We state this as the following lemma.

\begin{lemma}\label{lemma: inout} $(\WKL_0)$ \
Consider the set 
\[\{ \sigma \in C \colon (\forall n \le |\sigma|)(\forall \tau \in U[|\sigma|])
(f(\sigma,n) \not \succeq \tau)\}.\]
 Case \eqref{out} of Lemma~\ref{lemma: meet or avoid} holds if this set is infinite.
 Case \eqref{in} of Lemma~\ref{lemma: meet or avoid}  holds if this set is finite, and further there is a $(D,g)$ refining $(C,f)$ such that for all $X \in [C]$ there is a $k$ with $F^k(X) \in [D]$.
\end{lemma}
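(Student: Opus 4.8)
The plan is essentially to re-read the proof of Lemma~\ref{lemma: meet or avoid} and extract exactly the two structural facts it already establishes. Recall that proof splits on whether the set
\[
D_0 = \{ \sigma \in C \colon (\forall n \le |\sigma|)(\forall \tau \in U[|\sigma|])(f(\sigma,n) \not\succeq \tau)\}
\]
is infinite. So the first step is to observe that this $D_0$ is literally the set named in the statement of Lemma~\ref{lemma: inout}, so there is nothing to reconstruct: the case division of the two lemmas coincide. In the case $D_0$ infinite, the proof of Lemma~\ref{lemma: meet or avoid} showed $(D_0,f)$ refines $(C,f)$ and $[D_0]\cap[U]=\emptyset$, which is precisely case~\eqref{out}; so I would just quote that. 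The only mild point is to note that we are entitled to decide which case we are in inside $\WKL_0$ — but that is not needed for the statement, which is phrased conditionally (``Case~\eqref{out} holds if this set is infinite'').

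The substantive content is the second sentence, for the case $D_0$ finite. Here I would point to the paragraph of the proof of Lemma~\ref{lemma: meet or avoid} that, given $s$ least with no string of length $s$ in $D_0$, constructs $D_1$ and the piece-wise combination $g$ of iterates of $f$ with $(D_1,g)\le (C,f)$ and $[D_1]\subseteq [U]$. The extra information to record is the line ``Take any $X\in[C]$ and let $\sigma=X\until{s}$. As $\sigma\notin D_0$, there is a $k\le s$ such that $f^k(\sigma)\succ\tau\in U[s]$'' together with ``for all $n$, $f^k(X\until{n})\in D_1$'': this says exactly that for every $X\in[C]$ there is a $k$ (namely this $k\le s$) with $F^k(X)\in[D_1]$. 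So taking $(D,g)=(D_1,g)$ gives the claimed strengthening. I would write this up as: ``In the notation of the proof of Lemma~\ref{lemma: meet or avoid}, take $(D,g)=(D_1,g)$; the displayed computation there shows that for each $X\in[C]$, with $\sigma=X\until{s}$, there is $k\le s$ with $f^k(\sigma)$ extending an element of $U[s]$, whence $F^k(X)\succeq f^k(\sigma)$ lies in $[D_1]=[D]$.''

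I do not anticipate a real obstacle here — this lemma is a bookkeeping restatement, and the work was already done. The one thing to be careful about is matching conventions: we need $F^k(X)\in[D_1]$ (limit object) rather than merely $f^k(X\until{n})\in D_1$ for all $n$, but these are equivalent since $D_1$ is a tree and $F^k(X)=\lim_n f^k(X\until{n})$ by Lemma~\ref{lemma:basic}, and the containment $f^k(X\until n)\in D_1$ for all $n$ forces the limit into $[D_1]$. A secondary point worth a clause is that the bound $k\le s$ is uniform in $X$, which is what makes the statement usable in Section~\ref{sect: conservation}; this is immediate since $s$ is fixed once $D_0$ is seen to be finite. Since the proof is this short, I would simply fold it into a two- or three-sentence remark rather than a displayed argument.

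\begin{proof}
This is a restatement of information contained in the proof of Lemma~\ref{lemma: meet or avoid}. The set displayed above is exactly the set $D_0$ appearing there. If $D_0$ is infinite, then, as shown in that proof, $(D_0,f)$ refines $(C,f)$ and $[D_0]\cap[U]=\emptyset$, so case~\eqref{out} of Lemma~\ref{lemma: meet or avoid} holds. If $D_0$ is finite, let $s$, $D_1$ and $g$ be as constructed there, so that $(D_1,g)\le (C,f)$ is a system with $[D_1]\subseteq[U]$, giving case~\eqref{in}. Moreover, the argument there shows that for any $X\in[C]$, putting $\sigma=X\until{s}$, since $\sigma\notin D_0$ there is a $k\le s$ with $f^k(\sigma)$ extending some $\tau\in U[s]$, and hence $f^k(X\until{n})\in D_1$ for all $n$. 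By Lemma~\ref{lemma:basic} it follows that $F^k(X)\in[D_1]$. Taking $(D,g)=(D_1,g)$ yields a refinement of $(C,f)$ with the property that for every $X\in[C]$ there is a $k$ (indeed $k\le s$) with $F^k(X)\in[D]$.
\end{proof}
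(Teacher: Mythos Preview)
Your proposal is correct and matches the paper's approach exactly: the paper does not give a separate proof of this lemma, merely remarking that it records information already contained in the proof of Lemma~\ref{lemma: meet or avoid}, and your write-up does precisely that extraction. The one small addition you make---invoking Lemma~\ref{lemma:basic} to pass from $f^k(X\until{n})\in D_1$ for all $n$ to $F^k(X)\in[D_1]$---is a reasonable clarification that the paper leaves implicit.
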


We make use of full induction for the following lemma.
\begin{lemma}
Any system  $(C,f)$ contains an almost periodic point $X$ such that $X' \le_T (C\oplus f)'$.
\end{lemma}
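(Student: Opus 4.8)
The goal is a low basis theorem for almost periodic points: build an almost periodic point $X$ of $(C,f)$ by a forcing-with-$\Pi^0_1$-classes construction relative to $C\oplus f$, deciding the jump $(C\oplus f)'$ along the way. I would organize the construction as a sequence of refinements $(C,f) = (C_0, g_0) \ge (C_1,g_1) \ge (C_2, g_2) \ge \cdots$, where at each stage $(C_{s+1}, g_{s+1}) \le (C_s, g_s)$ in the sense of the refinement relation just defined, each $C_s$ is a nonempty $\Pi^0_1$ class relative to $C\oplus f$ (with an index computable from $(C\oplus f)'$), and the limit point $X \in \bigcap_s [C_s]$ (which exists by compactness, i.e.\ $\WKL$, and whose existence as a \emph{low} set needs the usual low basis argument) is forced to be almost periodic. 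Each refinement will be indexed by a pair: an even stage handles the $e$-th jump question ``$\Phi_e^{C\oplus f\oplus X}(e)\!\downarrow$?'' and an odd stage handles the $c$-th almost-periodicity requirement ``$(\exists b)(\forall n)(\exists k<b)\, G_s^{n+k}(X) \succeq X\!\until{c}$'' for the appropriate transformation $g_s$.

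\textbf{The jump steps.} At a jump step we are given $(C_s, g_s)$ and must decide whether some $X \in [C_s]$ has $\Phi_e^{C\oplus f \oplus X}(e)\!\downarrow$. The c.e.\ (relative to $C\oplus f$) set of strings witnessing convergence, call it $U$, is exactly the kind of object Lemma~\ref{lemma: meet or avoid} handles: apply it to $(C_s, g_s)$ and $U$ to get a refinement $(C_{s+1}, g_{s+1}) \le (C_s, g_s)$ with either $[C_{s+1}] \cap [U] = \emptyset$ (so $\Phi_e^{C\oplus f\oplus X}(e)\!\uparrow$ for all $X$ in the new class, and the $e$-th bit of the jump is $0$) or $[C_{s+1}] \subseteq [U]$ (so the bit is $1$). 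Crucially, by Lemma~\ref{lemma: inout} the dichotomy is governed by whether an explicitly-given set is finite or infinite, which is a $\Pi^0_1(C\oplus f)$-vs-$\Sigma^0_1(C\oplus f)$ question and hence decidable from $(C\oplus f)'$; this keeps the indices of the $C_s$ uniformly computable from $(C\oplus f)'$, which is what delivers $X' \le_T (C\oplus f)'$ at the end. Transitivity of refinement (Lemma~\ref{lemma: transitive}) guarantees that the composed relation $(C_{s+1}, g_{s+1}) \le (C_0, g_0) = (C,f)$ persists, so at every stage we still have a genuine refinement of the original system.

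\textbf{The almost-periodicity steps.} This is where the real content lies and where I expect the main obstacle. At an almost-periodicity step with current system $(C_s, g_s)$ and a target length $c$ (chosen, say, so that we eventually treat every initial segment of the limit $X$), I want to force $(\exists b)(\forall n)(\exists k < b)\, F^{n+k}(X) \succeq X\!\until{c}$ where $F$ is the \emph{original} transformation. The plan is: commit to a long enough initial segment $\tau$ of $X$ of length $\ge c$ by passing to $C_{s+1} = C_s \cap [\tau]$ for a suitable $\tau \in C_s$ (nonemptiness must be arranged), then apply Lemma~\ref{lemma: meet or avoid} with $U = [\tau]$ (or rather with $U$ the clopen set of strings extending $\tau$) to $(C_s, g_s)$. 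Since the point we are building will lie in $[\tau]$, case \eqref{out} of that lemma is impossible (it would make $[C_{s+1}] \cap [\tau] = \emptyset$, contradicting $X \in [C_{s+1}] \cap [\tau]$ — so we must choose $\tau$ to ensure we land in case \eqref{in}, which by Lemma~\ref{lemma: inout} additionally hands us a refinement $(D,h) \le (C,f)$ with $F^k(Y) \in [D] \subseteq [\tau]$ for every $Y \in [C_s]$ and some $k$). Then Lemma~\ref{lemma: nbh ap} applied to $(D,h) \le (C,f)$ says precisely that every $X \in [D]$ satisfies $(\exists b)(\forall n)(\exists k \le b)\, F^{n+k}(X) \in [D] \subseteq [\tau]$, i.e.\ $F^{n+k}(X) \succeq X\!\until{c}$, which is the almost-periodicity requirement for this $c$. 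Setting $(C_{s+1}, g_{s+1}) = (D, h)$ completes the step. The delicate point — the main obstacle — is the bookkeeping that makes the $\tau$'s chosen at successive odd stages cohere into a single point $X$ while simultaneously respecting the jump-step refinements: one must choose, at each odd stage, $\tau$ extending the commitments made so far and lying in the current $C_s$, and argue such a $\tau$ exists (using that $C_s$ is a nonempty tree, so it has arbitrarily long strings, and refining $C_s$ to $C_s \cap [\tau]$ for an appropriate branch keeps it nonempty). A secondary subtlety is that the ambient transformation changes from stage to stage, so one has to keep re-expressing ``almost periodic'' in terms of the fixed original $F$ via the refinement chain and Lemma~\ref{lemma: nbh ap}; transitivity of $\le$ is exactly the tool for this. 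Finally, to get genuine \emph{lowness} rather than merely $X \le_T (C\oplus f)'$, one runs the standard low-basis bookkeeping: maintain the $C_s$ with $(C\oplus f)'$-computable indices and take $X$ to be the leftmost path of $\bigcap_s C_s$, so that $X' \le_T (C\oplus f)'$; since all induction used is $I\Sigma_1$ (the stage-by-stage recursion and the finite-injury-free nesting require only $\Sigma_1$ induction), the argument goes through over a base theory mild enough to be reused in Section~\ref{sect: conservation}.
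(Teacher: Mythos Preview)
Your overall architecture---a descending chain of refinements $(C_e,f_e)$ with jump steps driven by Lemma~\ref{lemma: meet or avoid} and almost-periodicity delivered by Lemma~\ref{lemma: nbh ap}---matches the paper, and your jump steps are essentially identical to the paper's. But your almost-periodicity steps contain a circularity, and more importantly they are unnecessary.

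The circularity: you propose to commit to an initial segment $\tau$ and then argue that case~\eqref{out} of Lemma~\ref{lemma: meet or avoid} ``is impossible'' because the point $X$ you are building lies in $[\tau]$. But $X$ does not exist yet; if case~\eqref{out} occurs for your chosen $\tau$, the refinement simply avoids $[\tau]$ and the eventual $X$ will \emph{not} extend $\tau$. What you actually need is to argue that \emph{some} $\tau$ of length $c$ yields case~\eqref{in}. (This is true---try each $\tau\in\{0,1\}^c$ in turn; each case~\eqref{out} outcome strictly shrinks the system, so case~\eqref{in} must eventually occur---but you did not say it, and you identified the obstacle as ``bookkeeping'' rather than existence.) Also, ``passing to $C_s\cap[\tau]$'' is not a legal move, since $(C_s\cap[\tau],g_s)$ is generally not a system.

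The paper sidesteps all of this. It performs \emph{only} jump steps: at stage $e+1$ it applies Lemma~\ref{lemma: meet or avoid} with $U_e=\{\sigma:\Phi_e^\sigma(e)\!\downarrow\}$, and that is the whole construction. The point is that every finite set---in particular every singleton $\{\sigma\}$---is $U_e$ for infinitely many $e$, so for each string $\sigma$ some stage forces either $[C_{e+1}]\subseteq[\sigma]$ or $[C_{e+1}]\cap[\sigma]=\emptyset$. Hence $\bigcap_e[C_e]$ is a \emph{single point} $X$, and for any $\sigma\prec X$ there is an $e$ with $[C_e]\subseteq[\sigma]$; Lemma~\ref{lemma: nbh ap} applied to $(C_e,f_e)\le(C,f)$ then gives the almost-periodicity bound for that $\sigma$ directly. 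No separate odd stages, no choice of $\tau$, no coherence bookkeeping.

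Two minor points. First, there is no need for a ``leftmost path'' trick to get lowness: the jump steps already decide $\Phi_e^X(e)$ at stage $e$, so $X'\le_T(C\oplus f)'$ follows immediately from the $(C\oplus f)'$-computability of the sequence $(C_e,f_e)$. Second, the paper explicitly allows full induction for this lemma (the construction iterates through the true $\omega$); the careful $I\Sigma_1$ versions are carried out separately as Lemmas~\ref{lemma: preservation induction} and~\ref{lemma: nbh periodicity}.
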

\begin{proof} We define a sequence of systems $\{(C_e, f_e)\}_{e \in \omega}$ such that for all $e$, 
$(C_{e+1}, f_{e+1}) \le (C_e, f_e)$. 
Let $(C_0, f_0) = (C,f)$.
At stage $e+1$, let $U_e = \{ \sigma \in 2^{<\omega} \colon \Phi_e^\sigma(e) \downarrow\}$. 
Let $(C_{e+1}, f_{e+1})$ refine $(C_e, f_e)$ such that either
$[C_{e+1}] \cap [U_e] = \emptyset$ or  $[C_{e+1}] \subseteq [U_e]$.

An  examination of the  proof of Lemma~\ref{lemma: meet or avoid} shows that this sequence can be constructed below $(C \oplus f)'$. In Lemma~\ref{lemma: meet or avoid}, $D_0$, $D_1$ and $g$ are defined uniformly from $C$ and $f$ (the definition of $D_1$ and $g$ depend on $D_0$ being finite). Further $(C\oplus f)'$, can determine whether or not $D_0$ is finite and hence decide how to refine  $(C,f)$.

By compactness, $\bigcap_e [C_e]$ is not empty. In fact $\bigcap_e [C_e]$ contains a unique point $X$, because every finite set occurs as infinitely many c.e.\ sets $U_e$. Now $X' \le_T  (C\oplus f)'$ because whether $\Phi_e^X(e)$ halts can be determined at stage $e$ of the construction. We show that $X$ is an almost periodic point of $C$. Fix $\sigma \prec X$. Now for some $e$, $[\sigma] \supseteq [C_e]$.  Thus  by Lemma~\ref{lemma: nbh ap} there is some bound $b$, such that for all $n$, there is some $k \le b$ such that 
$F^{n+k}(X) \in [C_e] \subseteq [\sigma]$.  Hence $X$ is an almost periodic point of $(C,f)$.
\end{proof}

Using the standard approach, the previous proposition can be used to build an $\omega$-model of $\RCAO$ and $\msf{AP}$ such that every real in the model is low. Hence we obtain the following theorem.

\begin{theorem}
\label{thm: AP does not imply ACA}
There is an $\omega$-model of $\RCA_0$ and $\msf{AP}$  that is not a model of $\ACA$.
\end{theorem}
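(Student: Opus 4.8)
The plan is to iterate the preceding lemma (any system $(C,f)$ contains an almost periodic point $X$ with $X' \le_T (C\oplus f)'$) to build a $\le_T$-increasing chain of low sets $\emptyset = A_0 \le_T A_1 \le_T A_2 \le_T \cdots$ whose downward closure under Turing reducibility is the desired $\omega$-model. First I would fix a bookkeeping enumeration in order type $\omega$ of all triples $(n,i,j)$ arranged so that the triple assigned to stage $s$ has $n \le s$. Having built $A_s$, at stage $s+1$ I look at the triple $(n_s,i_s,j_s)$: if the pair $(C,f) := (\Phi_{i_s}^{A_{n_s}}, \Phi_{j_s}^{A_{n_s}})$ is a genuine system, I invoke the lemma to get an almost periodic point $X_s$ of $(C,f)$ with $X_s' \le_T (C\oplus f)' \le_T A_{n_s}' \le_T A_s'$ and set $A_{s+1} = A_s \oplus X_s$; otherwise I set $A_{s+1} = A_s$. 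This need not be an effective construction — it is an $\omega$-model existence proof — so the case distinction is harmless. In either case $X_s' \le_T A_s'$, hence $A_{s+1}' \equiv_T A_s' \oplus X_s' \equiv_T A_s'$, and so by induction $A_s' \equiv_T \emptyset'$ for every $s$; in particular each $A_s$ is low.

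Next I would let $\mathcal{M}$ be the $\omega$-model whose second-order part is $\{Y : Y \le_T A_s \text{ for some } s\}$. Since the $A_s$ form a $\le_T$-increasing chain, this class is closed under Turing reducibility and join, so it is a Turing ideal and $\mathcal{M} \models \RCA_0$. It is not a model of $\ACA$: if $\emptyset'$ belonged to $\mathcal{M}$ then $\emptyset' \le_T A_s$ for some $s$, giving $\emptyset'' \le_T A_s' \equiv_T \emptyset'$, which is absurd; thus $\emptyset \in \mathcal{M}$ while its Turing jump is not, already refuting $\ACA_0$. Finally, $\mathcal{M} \models \msf{AP}$: if $\mathcal{M}$ regards $(C,f)$ as a system, then $C,f \le_T A_n$ for some $n$, say $C = \Phi_i^{A_n}$ and $f = \Phi_j^{A_n}$, and $(C,f)$ is then genuinely a system (the conditions of Definition~\ref{def: system2} are absolute, and $\mathcal{M}$ itself witnesses $[C]\ne\emptyset$); the bookkeeping assigns $(n,i,j)$ to some stage $s\ge n$, at which the construction adjoins an almost periodic point $X_s$ of $(C,f)$, so $X_s\in\mathcal{M}$ is the required point.

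I do not expect a genuine obstacle, since the substantive work — controlling the jump of the almost periodic point by the jump of the system — is precisely the content of the preceding lemma, and what remains is the textbook passage from such a lowness lemma to a low $\omega$-model. The one point demanding care is the bookkeeping: because the lemma only bounds $X_s'$ relative to $(C\oplus f)'$, the enumeration must guarantee that each system to be treated is coded by some $A_{n_s}$ with $n_s$ no larger than the current stage, and one must verify that every system occurring in the final model $\mathcal{M}$ is indeed coded by some $A_n$ and hence caught — both routine once $\mathcal{M}$ is set up as the union of the downward closures of the chain $\{A_s\}$.
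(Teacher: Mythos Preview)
Your overall strategy is exactly what the paper means by ``the standard approach'': iterate the low-almost-periodic-point lemma along a chain, take the Turing ideal generated, and observe that a low ideal cannot contain $\emptyset'$. The bookkeeping and the verification that $\mathcal{M}\models\msf{AP}$ are fine.

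There is, however, a genuine gap in your lowness step. You write ``$X_s' \le_T A_s'$, hence $A_{s+1}' \equiv_T A_s' \oplus X_s' \equiv_T A_s'$.'' The middle equivalence asserts $(A_s\oplus X_s)' \equiv_T A_s'\oplus X_s'$, which is false in general: the jump of a join can strictly exceed the join of the jumps. (For a concrete witness, Sacks splitting gives low c.e.\ sets $A,B$ with $A\oplus B\equiv_T\emptyset'$, so $A'\equiv_T B'\equiv_T\emptyset'$ while $(A\oplus B)'\equiv_T\emptyset''$.) Thus knowing only $X_s'\le_T A_s'$ does not yield $A_{s+1}'\le_T A_s'$, and your induction that each $A_s$ is low does not go through as stated.

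The fix is the standard one and requires no new ideas: relativize the lemma. In its proof one forces the jump of $X$ by deciding membership in the c.e.\ sets $U_e=\{\sigma:\Phi_e^\sigma(e)\!\downarrow\}$ via Lemma~\ref{lemma: meet or avoid}. Replacing these by $U_e=\{\sigma:\Phi_e^{A_s\oplus\sigma}(e)\!\downarrow\}$, which are c.e.\ in $A_s\ge_T C\oplus f$, the same argument produces an almost periodic point $X_s$ with $(A_s\oplus X_s)'\le_T A_s'$. With this relativized form your chain really does stay low and the rest of your proof goes through unchanged.
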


\section{A Conservation Result}
\label{sect: conservation}
\begin{lemma}[$\WKLO$]
\label{lemma: preservation induction}
Let  $(C,f)$ be a system, $P$ be a real, and $\varphi$ be a $\Delta_1$ formula. There is a system $(D,g)$ refining $(C,f)$ such that either
\begin{enumerate}
\item The set $\{m : (\exists s) \varphi(m,X\until{s}, P)\}$ is empty for  all $X \in [D]$; or
\item There is a $b$ such that for  all $X \in [D]$, $b$ is the least element of $\{m : (\exists s) \varphi(m,X\until{s}, P)\}$.
\end{enumerate}
\end{lemma}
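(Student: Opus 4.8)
The plan is to obtain $(D,g)$ from Lemma~\ref{lemma: meet or avoid} by a \emph{finite} iteration, whose length is bounded using compactness. For $m\in\omega$ put $U_m=\{\tau\in\strs:\varphi(m,\tau,P)\}$. Since $\varphi$ is $\Delta_1$, each $U_m$ is a set c.e.\ in $P$, and for $X\in\Cs$ one has $X\in[U_m]$ precisely when $(\exists s)\varphi(m,X\until{s},P)$; thus the set in the statement collects exactly those $m$ with $X\in[U_m]$. Conclusion~(1) then says that no $X\in[D]$ lies in any $[U_m]$, and conclusion~(2) says there is a single $b$ with every $X\in[D]$ lying in $[U_b]$ but in none of $[U_0],\dots,[U_{b-1}]$.

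First I would apply Lemma~\ref{lemma: meet or avoid} to $(C,f)$ with the c.e.\ set
\[
U=\{\tau\in\strs:(\exists m<|\tau|)(\exists s\le|\tau|)\,\varphi(m,\tau\until{s},P)\},
\]
obtaining $(D,g)\le(C,f)$ with $[D]\cap[U]=\emptyset$ or $[D]\subseteq[U]$. In the first case no $X\in[D]$ extends a string of $U$, and letting the length of the initial segment grow gives $\neg\varphi(m,X\until{s},P)$ for every $m$ and $s$; so conclusion~(1) holds with this $(D,g)$. In the second case $[D]$ is compact (this is the use of $\WKL_0$), hence covered by finitely many basic clopen sets $[\tau_1],\dots,[\tau_r]$ with each $\tau_i\in U$; each $\tau_i$ witnesses $\varphi(m_i,(\tau_i)\until{s_i},P)$ for some $m_i<|\tau_i|$ and $s_i\le|\tau_i|$, so putting $b^\ast=\max_i m_i$ we get that every $X\in[D]$ lies in some $[U_{m_i}]$, and in particular $\{m:X\in[U_m]\}$ is nonempty with least element at most $b^\ast$.

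In this second case, starting from $(D,g)$, I would iterate Lemma~\ref{lemma: meet or avoid} over $U_0,U_1,\dots,U_{b^\ast}$, using Lemma~\ref{lemma: transitive} to keep the composite a refinement of $(C,f)$. Given a refinement $(D_j,g_j)$ with $[D_j]\cap[U_m]=\emptyset$ for all $m<j$, apply the lemma with $U_j$: if the outcome is $[D_{j+1}]\subseteq[U_j]$, stop and output $(D_{j+1},g_{j+1})$ with $b=j$ (valid since then every $X\in[D_{j+1}]$ lies in $[U_j]$ but, as $[D_{j+1}]\subseteq[D_j]$, in none of $[U_0],\dots,[U_{j-1}]$); otherwise $[D_{j+1}]\cap[U_j]=\emptyset$, so $(D_{j+1},g_{j+1})$ still avoids $U_0,\dots,U_j$ and we continue. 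The iteration must stop by step $b^\ast$: were it to run through step $b^\ast$, the refinement $(D_{b^\ast+1},g_{b^\ast+1})$ would satisfy $[D_{b^\ast+1}]\subseteq[D]$ with no $X\in[D_{b^\ast+1}]$ lying in any of $[U_0],\dots,[U_{b^\ast}]$, contradicting the previous paragraph. As this is a recursion of length at most $b^\ast+1$, it is available using the $\Sigma^0_1$ induction of $\WKL_0$.

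The point that needs care is precisely this finiteness: a naive iteration of Lemma~\ref{lemma: meet or avoid} over all $m\in\omega$ would be an $\omega$-length process, and its ``limit'' is not visibly a refinement because the witnessing parameters $l,b$ of a piece-wise combination of iterates grow without bound along the iteration. Extracting the uniform bound $b^\ast$ from the single application to the combined set $U$ is what makes the construction legitimate over $\WKL_0$. The remaining points---that relativizing Lemmas~\ref{lemma: meet or avoid} and~\ref{lemma: transitive} to the parameter $P$ is harmless, and that $U$ and the $U_m$ are c.e.\ (immediate from $\varphi$ being $\Delta_1$ and the quantifiers defining $U$ being bounded)---are routine.
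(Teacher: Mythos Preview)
Your argument is correct and close in spirit to the paper's, but the two proofs are organized differently in a way worth noting. The paper defines, directly from $(C,f)$, the sets $O(\le n)=\bigcup_{m\le n}U_m$ and the trees $C(\le n)=\{\sigma\in C:(\forall i\le|\sigma|)(\forall\tau\in O(\le n))\,f(\sigma,i)\not\succeq\tau\}$, forms the single c.e.\ set $S=\{n:C(\le n)\text{ is finite}\}$, and applies $I\Sigma_1$ once to $S$. When $S$ has least element $b$, the paper first refines into $[O(\le b)]$ (via Lemma~\ref{lemma: inout}) and then refines to avoid $O(<b)$, using the extra clause of Lemma~\ref{lemma: inout} (every $X\in[C]$ eventually lands in the refinement) to derive the needed contradiction with minimality of $b$. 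Your route reverses the order: you first avoid $U_0,\dots,U_{b-1}$ by iterating the ``avoid'' branch of Lemma~\ref{lemma: meet or avoid}, and only then refine into $[U_b]$. What makes your iteration legitimate in $\WKL_0$ is the observation (implicit in your write-up) that in the ``avoid'' branch the transformation does not change, so the successive trees $D_j$ are given by a single primitive recursion with $g$ fixed; your trees $D_{j+1}$ then coincide with the paper's $C(\le j)$ (relative to the refined system), and your least stopping time is exactly the least element of the paper's $S$. Your approach buys a slightly cleaner endgame, since the invariant $[D_j]\cap[U_m]=\emptyset$ for $m<j$ is carried along and you do not need the ``every orbit visits $[D]$'' clause of Lemma~\ref{lemma: inout}; the paper's approach buys a cleaner appeal to $I\Sigma_1$, since it is phrased as a single least-element search rather than a bounded recursion.
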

\begin{proof}
For all $n$, define the following sets
\begin{align*}
&O(\le n) = \{\tau \colon (\exists m \le n) \varphi(m,  \tau, P)\},\\
&O(< n) = \{\tau \colon (\exists m < n) \varphi(m, \tau, P)\}, \\
&C(\le n) =\{ \sigma \in C \colon (\forall i \le |\sigma|)(\forall \tau \in O(\le n))(f(\sigma, i)\not \succeq \tau )\},\\
&C(< n) =\{ \sigma \in C \colon (\forall i \le |\sigma|)(\forall \tau \in O(< n))(f(\sigma, i)\not \succeq \tau )\}.
\end{align*}
Because of the uniformity in the above definitions, we have that the following set is c.e.
\[S= \{n \colon C(\le n) \mbox{ is finite}\}.\]
Hence by $I\Sigma_1$ induction $S$ is either empty or contains a least element. First we consider the case that  $S$  is empty. 
Let $U= \{\tau \colon (\exists n)\varphi(n,\tau, P)\}$. 
Let $(D, g)$ be a refinement of $(C,f)$ guaranteed by Lemma~\ref{lemma: meet or avoid}.
If $[D] \cap [U] = \emptyset$, then the system $(D,g)$ has the  required properties.  If $[D] \subseteq [U]$, then by applying compactness there is some bound $b$, 
such that 
\[[D] \subseteq [\{\tau \colon (\exists n \le b)\varphi(n,\tau)\}].\]
We now replace $(C,f)$ by $(D,g)$ and adjust the definitions accordingly. This replacement does not affect the following argument  because of the transitivity of the refinement relation. Note that now $S$ cannot be empty because it must contain $b$. 

Now consider the case when $S$ has a least element $b$. By Lemma~\ref{lemma: inout}, there is a system $(E, h)$ such that 
$(E, h)$ refines $(C,f)$ and $[E] \subseteq [O(\le b)]$. 
Further we have that for all $X \in [C]$ there is a
$k$ such that $F^k(X) \in [E]$. 
Now consider the set 
\[\{\sigma \in E \colon (\forall i \le |\sigma|)(\forall \tau \in O(< b))(g(\sigma, i)\not \succeq \tau )\}.
\]
If this set is empty, then if $X \in [C]$, we have that $F^k(X) \in [E]$ for some $k$ and so for some $j$, $G^j(F^k(X))$ must extend some element of $O(<b)$. This implies that 
$[C] \subseteq [O(< b)]$ as $g$ is a piecewise combination of iterates of $f$. Using compactness, this implies that $C(\le m)$ is empty for some $m$ strictly less than $b$, contradicting the minimality of $b$. Hence the set is not empty and so by Lemma~\ref{lemma: inout} there is a a system $(F,i)$ refining $(E,h)$ (and consequently refining $(C,f)$) such that $[F] \cap  [O(< b)]= \emptyset$. If $X \in [F]$, then $\{m \colon (\exists s) \varphi(m, X\until{s},P)\}$ has least element $b$.  
\end{proof}

\begin{lemma}[$\WKLO$]
\label{lemma: nbh periodicity}
Let $(C,f)$ be a system and $i \in \omega$. There is a subsystem $(D,f)$ of $(C,f)$ and  $b\in \omega$ such that for all $X\in D$
\[(\forall n)(\exists k < b)(F^{n+k}(X) \in [X\until{i}]).\] 
\end{lemma}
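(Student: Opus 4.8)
The plan is to carry out a construction entirely analogous to the proof that any system contains an almost periodic point that is low relative to the system, but now done \emph{internally} inside $\WKL_0$ and only finitely often — once for each of the finitely many strings of length $i$. The key observation is that we only need to "force" a bounded amount of information: we want to ensure, for the eventual point in the intersection, that for each string $\rho$ of length $i$ the point either lies in $[\rho]$ or lies in a system all of whose points return to $[\rho']$ for another fixed $\rho'$ of length $i$. So I would enumerate the $2^i$ strings $\rho_0,\dots,\rho_{2^i-1}$ of length $i$ and build a finite descending chain of systems $(C,f) = (D_0,g_0) \ge (D_1,g_1) \ge \cdots \ge (D_{2^i},g_{2^i})$, where each refinement step handles one string $\rho_j$.

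At step $j+1$, apply Lemma~\ref{lemma: meet or avoid} with the c.e.\ (indeed clopen) set $U = [\rho_j]$ to get a refinement $(D_{j+1},g_{j+1}) \le (D_j,g_j)$ with either $[D_{j+1}] \cap [\rho_j] = \emptyset$ or $[D_{j+1}] \subseteq [\rho_j]$. (Transitivity of refinement, Lemma~\ref{lemma: transitive}, is what lets this chain close up into a single refinement of $(C,f)$.) After all $2^i$ steps, let $(D,g) = (D_{2^i}, g_{2^i})$; this is a refinement of $(C,f)$, though not necessarily a \emph{subsystem}. Now take any $X \in [D]$: there is a unique $\rho_j \prec X$ with $|\rho_j| = i$, and by construction, since $X \in [D] \subseteq [D_{j+1}]$ and $[D_{j+1}] \cap [\rho_j] \neq \emptyset$, we are in the case $[D_{j+1}] \subseteq [\rho_j]$, hence $[D] \subseteq [\rho_j] = [X\!\until{i}]$. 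By Lemma~\ref{lemma: nbh ap}, since $(D,g) \le (C,f)$, there is a bound $b$ (depending only on the refinement data, so uniform in $X$ after using compactness to make it independent of which $\rho_j$) with $(\forall n)(\exists k \le b)\, F^{n+k}(X) \in [D] \subseteq [X\!\until{i}]$.

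The remaining gap between this and the statement as written is that the lemma asks for a genuine \emph{subsystem} $(D,f)$ using the \emph{same} $f$, whereas the refinement construction produces a piece-wise combination of iterates $(D,g)$. I would close this by passing from $(D,g)$ to the closure of $[D]$ under $F$: let $[D^*]$ be the smallest closed $F$-invariant subset of $[C]$ containing some point of $[D]$ — concretely, $D^* = \{\sigma \in C : (\exists X \in [D])(\exists k)\, F^k(X) \succeq \sigma\}$, which is a $\Pi^0_1$ class provable to be a nonempty subtree with $f(\sigma) \in D^*$ for $\sigma \in D^*$, so $(D^*, f)$ is a subsystem of $(C,f)$. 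Since every point of $[D^*]$ is a shift of a point of $[D]$ and each point of $[D]$ visits $[X\!\until{i}]$ with gaps $\le b$, one recovers the required return property for $(D^*,f)$, possibly with $b$ replaced by $b+i$ or so to absorb the extra shift. The main obstacle I anticipate is precisely this last bookkeeping: making sure the "same map $f$" subsystem inherits a \emph{uniform} bound $b$ valid for all its points and all initial segments of length $i$, rather than a bound that secretly depends on how far a point sits from $[D]$; getting the quantifiers in the right order here, while staying within $\WKL_0$ and $I\Sigma_1$, is the delicate part, and it may be cleaner to absorb it by reproving a mild strengthening of Lemma~\ref{lemma: nbh ap} tailored to a fixed target neighborhood.
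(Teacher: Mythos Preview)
Your first phase is fine: iterating Lemma~\ref{lemma: meet or avoid} over the $2^i$ cylinders does produce a refinement $(D,g)\le(C,f)$ with $[D]\subseteq[\rho_j]$ for exactly one $\rho_j\in\{0,1\}^i$, and Lemma~\ref{lemma: nbh ap} then gives a uniform bound $b$ with $(\forall n)(\exists k\le b)\,F^{n+k}(X)\in[D]\subseteq[X\!\until{i}]$ for every $X\in[D]$. So far so good --- this already proves a ``refinement'' version of the lemma, which incidentally is all that the application in Lemma~\ref{lemma: adding ap point} needs.

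The genuine gap is in the conversion to a subsystem with the \emph{same} $f$. Your orbit closure $D^*$ can indeed be realized as the finite union $\bigcup_{k\le b}F^k([D])$ (so the $\Pi^0_1$ issue is not fatal), and every $Y\in[D^*]$ does have its $F$-orbit returning to $[D]\subseteq[\rho_j]$ with gaps $\le b$. But the lemma requires each $Y\in[D^*]$ to return to $[Y\!\until{i}]$, and for $Y=F^m(X)$ with $m>0$ there is no reason $Y\!\until{i}=\rho_j$. Your construction gives no control whatsoever over how often orbits in $[D^*]$ visit any cylinder $[\rho_l]$ with $l\ne j$; adjusting $b$ by an additive constant cannot repair this, because the problem is the target neighbourhood, not the bound. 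A concrete picture: after the first ``in'' step the dynamics $g$ jump straight back into $[\rho_j]$, so subsequent ``out'' steps tell you nothing about $f$-orbits in the other cylinders.

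The paper avoids this by never entering the ``in'' branch at all. It searches directly (via $I\Sigma_1$ on codes of subsets of $\{0,1\}^i$) for a \emph{maximal} set $E_k\subseteq\{0,1\}^i$ such that the tree
\[
D=\{\sigma\in C:(\forall n\le|\sigma|)(\forall\tau\in E_k)\,f^n(\sigma)\not\succeq\tau\}
\]
is infinite. This $D$ is automatically closed under $f$, so $(D,f)$ is a genuine subsystem, and maximality says that for every $\rho\in\{0,1\}^i\setminus E_k$ the corresponding tree with $E_k\cup\{\rho\}$ is finite, yielding a bound $s_\rho$ on how long any point of $[D]$ can avoid $[\rho]$. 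Bounding over the finitely many surviving $\rho$'s gives $b$. The point is that the paper's $[D]$ may span several length-$i$ cylinders, and the argument shows orbits visit \emph{all} of them with bounded gaps --- exactly the information your single-cylinder refinement throws away.
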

\begin{proof}
Take a computable coding of finite sets of finite strings $\{E_k\}_{k\in \omega}$ such that if $E_k \subseteq E_j$, then $k\le j$. Now enumerate a c.e.\ set $W$ by adding $k$ to $W$ if 
\begin{enumerate}
\item The number $k$ is a code for a finite set $E_k \subseteq \{0,1\}^i$.
\item The following tree is finite
\[\{ \sigma \in C  \colon (\forall n \le |\sigma|)(\forall \tau \in E_k)(f(\sigma, n) \not \succeq \tau)\}.\] 
\end{enumerate}
As $C \ne \emptyset$, the code for $\emptyset$ is not an element of $W$. The code for $\{0,1\}^i$ is an element of $W$. Hence by $I\Sigma_1$ induction, there is a maximum element $k$ such that $E_k \subseteq \{0,1\}^i$, and for all $x \le k$ that code subsets of $\{0,1\}^i$, $x \not \in W$.  Let
\[D=\{ \sigma \in C  \colon (\forall n \le |\sigma|)(\forall \tau \in E_k)(f(\sigma, n) \not \succeq \tau)\}.\]
Hence $D$ is infinite and $[D] \subseteq [\{0,1\}^i \setminus E_k]$. 
Now if $\sigma \in  \{0,1\}^i \setminus E_k$, then let $s_\sigma$ be the least number such that 
\[\{ \sigma \in C  \colon (\forall n \le |\sigma|)(\forall \tau \in E_k\cup \{\sigma\})(f(\sigma, n) \not \succeq \tau)\}\]
contains no strings of length $s_\sigma$. Hence for any $X \in [D]$ there is some $n \le s_\sigma$ such that $F^n(X) \in [\sigma]$. In particular, this includes  any element of  $[D] \cap [\sigma]$. The set
$\{(\sigma, s_\sigma) \colon \sigma \in   \{0,1\}^i \setminus E_k\}$ is also c.e.\ and hence by $B\Sigma_1$ induction, there is some $b$ that bounds all elements of this set.
\end{proof}

\begin{definition}
Let $\M$ and $\hat{\M}$ be models of 2\textsuperscript{nd} order arithmetic. Call  \textit{$\M$  an $\omega$-submodel of $\hat{\M}$} if $\M$ is a submodel of $\hat{\M}$ and they share the same first order part.
\end{definition}

\begin{theorem}[Harrington -- unpublished see \cite{Simpson_2009}] Let $\M$ be a countable model of $\RCA_0$. Then there exists a countable model $\hat{\M}$ of $\WKL_0$ such that 
$\M$ is an $\omega$-submodel of $\hat{\M}$.
\end{theorem}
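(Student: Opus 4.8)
The plan is to build $\hat{\M}$ as the union of an increasing $\omega$-chain of models $\M = \M_0 \subseteq \M_1 \subseteq \cdots$, all with the same first-order part, where at each stage we adjoin one new set that forces a previously-troublesome instance of weak K\"onig's lemma to have a path, without destroying $\RCA_0$. Concretely, fix an enumeration (in the metatheory, using countability of $\M$) of all pairs $(\M_n, T)$ where $T$ is an infinite subtree of $2^{<\omega}$ coded in $\M_n$. At stage $n+1$ we need to produce a set $G$ that is a path through $T$ such that $\M_n[G] := \{ Z : Z \le_T X \oplus G \text{ for some } X \in \M_n\}$, together with the first-order part of $\M_n$, still satisfies $\RCA_0$. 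Since $\RCA_0$ amounts to $\Delta^0_1$-comprehension plus $\Sigma^0_1$-induction, and the first-order part is unchanged, the content is: (i) every $\Delta^0_1$-definable-with-parameters-from-$\M_n\cup\{G\}$ set already lies in $\M_n[G]$ (this is automatic from the definition of $\M_n[G]$ as a Turing ideal, provided $\M_n[G]$ really is closed under $\oplus$ and $\le_T$, which it is by construction), and (ii) $\Sigma^0_1$-induction holds in $\M_n[G]$. The latter is the real point and is where the genericity of $G$ must be exploited.

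The key step is therefore: given a countable model $\M_n$ of $\RCA_0$ and an infinite tree $T \in \M_n$, find a path $G \in [T]$ such that $\Sigma^0_1$-induction over the parameters of $\M_n$ together with $G$ is preserved. Here I would invoke the standard low-basis-style / forcing argument: work with conditions that are finite subtrees-with-a-stem (or simply conditions $\sigma \in T$ together with a finite amount of committed information), and build $G$ to be sufficiently generic for the forcing whose conditions are the infinite subtrees of $T$ coded in $\M_n$ ordered by inclusion. The crucial lemma is that if $G$ is generic for this notion of forcing relative to $\M_n$, then for every $\Sigma^0_1$ formula $\psi(x, Y)$ and every parameter $A \in \M_n$, the set $\{ x : \psi(x, A \oplus G)\}$ has a $\Sigma^0_1$-definition with parameters in $\M_n$ alone modulo the generic's forcing data, so that induction for it reduces to induction in $\M_n$, which holds. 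One packages this by showing the generic $G$ can be taken so that $A \oplus G$ has the same first-order theory with parameters from $\M_n$ — more precisely, that $(\M_n; A\oplus G)$ satisfies $\Sigma^0_1$-induction because any failure would, by forcing, already be a failure witnessed inside $\M_n$. Iterating this construction $\omega$ times (a bookkeeping argument ensures every tree in every $\M_n$ is eventually handled, since each $\M_n$ is countable and the chain is cofinal), the union $\hat{\M} = \bigcup_n \M_n$ has the same first-order part as $\M$, satisfies $\RCA_0$, and satisfies $\WKL_0$ because any infinite tree in $\hat{\M}$ appears in some $\M_n$ and was given a path at a later stage.

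The main obstacle is verifying that $\Sigma^0_1$-induction survives the adjunction of $G$; this is precisely the nontrivial content of Harrington's theorem (as opposed to the cheap fact that one can always force a path through a single tree). The right tool is the forcing/genericity machinery for $\WKL_0$ conservation — one shows that the forcing relation for $\Sigma^0_1$ formulas is itself $\Sigma^0_1$-definable over $\M_n$, so that deciding instances of $\psi(x, A\oplus G)$ uniformly is governed by a $\Sigma^0_1$ fact about conditions in $\M_n$, and hence any purported counterexample to induction in $(\M_n[G]; \in)$ pulls back to a counterexample in $\M_n$, which is impossible. I would cite the detailed development in Simpson's book (Section IX.2, where this theorem and its proof via forcing with $\Pi^0_1$ classes are presented); the proof sketch above follows that treatment. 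No genuinely new ideas are needed beyond carefully tracking the definability of the forcing relation and the standard chain-of-models bookkeeping.
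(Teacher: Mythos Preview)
The paper does not actually prove this theorem: it is stated with attribution to Harrington and a citation to Simpson's book, and then used as a black box in the subsequent lemma. So there is no ``paper's own proof'' to compare against beyond the reference to \cite{Simpson_2009}.

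Your sketch is the standard forcing argument (conditions are infinite subtrees of $T$ coded in $\M_n$, genericity ensures $\Sigma^0_1$-induction survives because the $\Sigma^0_1$ forcing relation is $\Sigma^0_1$-definable over $\M_n$), which is exactly the treatment in Simpson, Section~IX.2, that the paper defers to. So your proposal is correct and is essentially the approach the paper is implicitly invoking via the citation.
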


\begin{lemma}
\label{lemma: adding ap point}
Let $\M$ be a countable model of $\WKL_0$ and let $(C,f)$ be a system in $\M$. 
Then there exists a model $\hat{\M}$ of $\WKL_0$ such that $\M$ is an $\omega$-submodel of $\hat{\M}$ and $\hat{\M}$ contains an almost periodic point for the system $(C,f)$.
\end{lemma}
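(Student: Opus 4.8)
The plan is to build $\hat{\M}$ by iterating the construction of an almost periodic point "step by step" inside $\M$, exactly as in the low-basis argument of Section~\ref{sect: ACA AP}, but now forcing with $\Delta_1$ formulas \emph{with parameters from $\M$} rather than with Turing functionals, so that the generic point we add reflects the first-order theory of $\M$. Concretely, I would enumerate in $\M$ all triples $(D,g,\varphi)$ where $(D,g)$ ranges over systems of $\M$ refining $(C,f)$ and $\varphi$ ranges over $\Delta_1$ formulas with parameters in $\M$, and construct inside $\M$ a descending sequence of systems $(C_0,f_0)=(C,f)\ge (C_1,f_1)\ge\cdots$ (descent in the refinement order), where at stage $e$ we apply Lemma~\ref{lemma: preservation induction} to decide, uniformly and below $(C_e\oplus f_e)'$, whether the relevant $\Sigma_1$-definable set of witnesses is empty on $[C_{e+1}]$ or has a fixed least element on $[C_{e+1}]$. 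Since $\M\models\WKL_0$ and the sequence is $\M$-definable, $\M$ can carry out the construction; by compactness (available in $\WKL_0$) the nested sequence $\bigcap_e[C_e]$ is nonempty, and as in the proof of the last lemma of Section~\ref{sect: ACA AP} it in fact pins down a unique point $X$, which by Lemma~\ref{lemma: nbh ap} (applied along each refinement) is an almost periodic point of $(C,f)$.

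Next I would let $\hat{\M}$ be the $\omega$-submodel of the reals generated over $\M$ by $X$ together with the second-order part of $\M$ — i.e.\ take the collection of all reals $\Delta_1$-definable from $X$ and finitely many reals of $\M$. The point of having forced with \emph{all} $\Delta_1$ formulas with $\M$-parameters is that this genericity guarantees two things at once. First, every $\Sigma^0_1$ fact about $X$ (with $\M$-parameters) that is forced is decided, so the jump of $X$ relative to $\M$ is controlled and no new sets get added that would inject $\ACA$-strength or violate $\Sigma^0_1$-induction; this is what keeps $\hat{\M}\models\RCA_0$ and, combined with Harrington-style low-basis bookkeeping as in Section~\ref{sect: ACA AP}, keeps $\hat{\M}\models\WKL_0$. (One may instead, if cleaner, first run the construction to get $X$ and then invoke the Harrington theorem quoted just above to extend $\M[X]$ to a model of $\WKL_0$ with the same first-order part; since $X$ is added as an $\omega$-submodel, the first-order part is unchanged and the Harrington extension preserves $\omega$-submodelhood through $\M$.) Second, $X\in\hat{\M}$ by construction, and $X$ is an almost periodic point of $(C,f)$, so $\hat{\M}$ contains the desired point.

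The one nontrivial verification is that the construction of the sequence $\{(C_e,f_e)\}$ and its intersection can genuinely be performed \emph{inside} $\M$, using only $\WKL_0$ (and $I\Sigma_1$): this is where Lemmas~\ref{lemma: meet or avoid}, \ref{lemma: inout}, \ref{lemma: preservation induction}, and \ref{lemma: nbh periodicity} do the work — each refinement step is $\Delta_1$ in the previous system together with at most one bit of its jump, and the relevant "is this tree finite" questions are handled by $I\Sigma_1$ induction over c.e.\ sets exactly as in those lemmas. I expect the main obstacle to be the routine but delicate bookkeeping showing that the enumeration of parameters-from-$\M$ $\Delta_1$ formulas can be indexed inside $\M$ and that the resulting descending sequence of systems, together with Lemma~\ref{lemma: nbh ap}, still yields a \emph{single} almost periodic point — in particular, arranging that every basic clopen neighborhood $[\sigma]\supseteq[C_e]$ of $X$ is eventually captured, which is what upgrades "recurrent" to "almost periodic." Once that is in place, the $\omega$-submodel and $\WKL_0$-preservation claims follow from the now-standard machinery, and the theorem of the next section ($\RCA_0+\msf{AP}$ is $\Pi^1_1$-conservative over $\RCA_0$) will be obtained by iterating this lemma $\omega$ times in the usual way.
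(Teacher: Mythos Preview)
Your overall architecture matches the paper's: build a descending chain of refinements $(C_0,f_0)\ge(C_1,f_1)\ge\cdots$, use Lemma~\ref{lemma: preservation induction} at (some) stages to force every $\Sigma_1$-with-parameter set to be either empty or to have a fixed least element on the surviving class, take $R\in\bigcap_e[C_e]$, close $\M$ under computability from $R$ together with reals of $\M$, and finally invoke Harrington's theorem to recover $\WKL_0$. That is exactly how the paper proceeds.

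There is, however, a genuine gap in your write-up: the construction is \emph{not} carried out inside $\M$, and it cannot be. You repeatedly say the sequence is ``$\M$-definable'' and that ``$\M$ can carry out the construction,'' and you appeal to compactness ``available in $\WKL_0$'' to get $\bigcap_e[C_e]\neq\emptyset$. None of this is correct. Each refinement step requires deciding whether a certain tree is finite---one bit of the jump---so the function $e\mapsto(C_e,f_e)$ is not computable in $\M$; and the sequence is indexed by the \emph{true} $\omega$, which is not even an object of $\M$ when $\M$ is nonstandard. What is true (and what the paper uses) is that each individual $(C_e,f_e)$ lies in $\M$, because Lemmas~\ref{lemma: preservation induction} and~\ref{lemma: nbh periodicity} are theorems of $\WKL_0$; the enumeration of $\Delta_1$ formulas and of the reals of $\M$, the bookkeeping, and the selection of $R\in\bigcap_e[C_e]$ are all done \emph{externally}. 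In particular $R$ need not lie in $\M$---indeed the whole point is that in general it does not, which is why one must then verify that adjoining it preserves $I\Sigma_1$.

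A smaller point: you leave ``capturing every basic neighborhood of $X$'' as an anticipated obstacle and hope uniqueness (as in the low-basis lemma of Section~\ref{sect: ACA AP}) will take care of it. That can be made to work, but the paper handles it more directly: it interleaves a second kind of stage, applying Lemma~\ref{lemma: nbh periodicity} with $i$ ranging over all of $\setN$, so that for every $i$ some $[C_e]$ already witnesses the bounded-return property at precision $i$. This avoids any appeal to uniqueness and makes the verification that $R$ is almost periodic immediate from Lemma~\ref{lemma: nbh ap}. Your enumeration of ``all triples $(D,g,\varphi)$'' with $(D,g)$ ranging over refining systems is also unnecessary; one only enumerates pairs $(\varphi,P)$ of $\Delta_1$ formulas and parameters from $\M$ (for the odd stages) and elements of $\setN$ (for the even stages).
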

\begin{proof}
By Harrington's theorem it is only necessary to find a model $\hat{\M}$ of $\RCA_0$ such that $\M$ is an $\omega$-submodel of $\hat{\M}$ and $\hat{\M}$ contains an almost periodic point for the system $(C,f)$.
Let $\setN$ be the natural numbers inside $\M$ and let $\setR$ be the reals inside $\M$. From outside of the model, 
let  
$g: \omega \rightarrow \setN$ and $h:\omega \rightarrow \setR$ be bijections.
Define $(C_0, f_0) =(C, f) $. Now inductively define $(C_{e+1}, f_{e+1})$ as follows.\begin{enumerate}
\item If $e =2\cdot \la n, m\ra +1$,  then let $(C_{e+1}, f_{e+1})$ refine $(C_{e}, f_{e})$ as per Lemma~\ref{lemma: preservation induction} with $\varphi$ the $n$\textsuperscript{th} $\Delta_1$ formula and $P=h(m)$.
\item If $e=2\cdot n+2$ then let $(C_{e+1}, f_{i+1})$ refine $(C_{e}, f_{e})$ as per Lemma~\ref{lemma: nbh periodicity} with $i=g(n)$.
\end{enumerate}

Take $R \in \bigcap_e [C_e]$. Let $\hat{\M}$ be the model obtained by adding all reals computable in $R\oplus Y$ to $\M$ for any $Y \in \M$. 
We will show that $\hat{\M}$ is a model of $I\Sigma_1$ induction. Let $W$ be the  $n$\textsuperscript{th} c.e.\ set relative to $R$ with parameter $h(m)$. Let $e=2\cdot \la n, m\ra +1$. If the outcome of  
Lemma~\ref{lemma: preservation induction} was that the set 
\[\{m : (\exists s) \varphi(m,X\until{s}, P)\}\]
is empty for all $X \in [C_{e+1}]$, then the set $\{m : (\exists s) \varphi(m,R\until{s}, P)\}$ is also empty because otherwise a path with this property could be found inside $\M$ using $\WKL$. Similarly if for some $b$,  the  set 
\[\{m : (\exists s) \varphi(m,X\until{s}, P)\}\]
has a least element $b$, for all $X \in [C_{e+1}]$, then the set 
$\{m : (\exists s) \varphi(m,R\until{s}, P)\}$ also has least element $b$.
 
We know that $R$ is an almost periodic point of $(C,f)$ because $R \in [C_{e+1}]$ for $e=2\cdot n+2$ establishes that there exists a $b$ such that for all $m$, there is a $k \le b$ such that 
$F^{m+k}(R) \in R\until{g(n)}$. 
%
%
\end{proof}

\begin{theorem}Let $\M$ be a countable model of $\RCA_0$. Then there exists a countable model $\hat{\M}$ of $\RCA_0 + \msf{AP}$ such that 
$\M$ is an $\omega$-submodel of $\hat{\M}$.
\end{theorem}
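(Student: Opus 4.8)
The plan is the standard union-of-chain argument, using Lemma~\ref{lemma: adding ap point} together with a bookkeeping to kill, one system at a time, every system that ever appears. First I would apply Harrington's theorem to the given $\M$ to obtain a countable $\M_0 \models \WKL_0$ with $\M$ an $\omega$-submodel of $\M_0$; this puts us inside $\WKL_0$, which is exactly the hypothesis Lemma~\ref{lemma: adding ap point} requires. Fix a bookkeeping function $s \mapsto (i_s, j_s)$ with $i_s \le s$ that hits every pair infinitely often. Supposing the countable model $\M_s \models \WKL_0$ has been built, with $\M_t$ an $\omega$-submodel of $\M_s$ for all $t \le s$, let $(C,f)$ be the $j_s$-th system of $\M_{i_s}$ (in some fixed enumeration), if there is one. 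Because $\M_{i_s}$ is an $\omega$-submodel of $\M_s$ and condition~(iv) of Definition~\ref{def: system2} is absolute under adding reals (as noted in the discussion following that definition), $(C,f)$ is still a system in $\M_s$, so Lemma~\ref{lemma: adding ap point} applies and yields $\M_{s+1} \models \WKL_0$ with $\M_s$ an $\omega$-submodel of $\M_{s+1}$ and an almost periodic point of $(C,f)$ lying in $\M_{s+1}$; if $\M_{i_s}$ has no $j_s$-th system, put $\M_{s+1} = \M_s$. Let $\hat{\M} = \bigcup_s \M_s$.

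It then remains to verify three things. (1) $\hat{\M} \models \RCA_0$: all the $\M_s$ share the first-order part of $\M$, hence so does $\hat{\M}$, and since an instance of $\Sigma^0_1$-induction or $\Delta^0_1$-comprehension over $\hat{\M}$ uses a parameter that already lies in some $\M_s$ and is arithmetic, its truth over $\M_s$ and over $\hat{\M}$ agree, so the axiom transfers up from $\M_s$; thus $\hat{\M} \models \RCA_0$ (indeed $\models \WKL_0$, since an infinite binary tree of $\hat{\M}$ lies in some $\M_s$ and the path provided there survives). (2) $\M$ is a countable $\omega$-submodel of $\hat{\M}$: this follows from transitivity of the $\omega$-submodel relation along the chain and the fact that a countable union of countable models is countable. (3) $\hat{\M} \models \msf{AP}$: any system $(C,f) \in \hat{\M}$ already belongs to some $\M_i$, say as its $j$-th system, and at some stage $s$ with $(i_s,j_s)=(i,j)$ an almost periodic point $X$ of $(C,f)$ was placed into $\M_{s+1} \subseteq \hat{\M}$; since both ``$X \in [C]$'' and the defining condition $(\forall c)(\exists b)(\forall n)(\exists k < b)(F^{n+k}(X) \succeq X\until{c})$ are arithmetic in $X$, $C$, $f$ over the fixed first-order part, they remain true in $\hat{\M}$, so $X$ is an almost periodic point of $(C,f)$ there.

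The one genuinely delicate point — the would-be main obstacle — is this absoluteness bookkeeping: we must know both that a system of a smaller model of the chain is still a system in every larger model, so that Lemma~\ref{lemma: adding ap point} is legitimately applicable, and that an almost periodic point, once added, stays almost periodic in the union. Both reduce to the fact that the relevant clauses are $\Pi^0_1$ or purely arithmetic over a first-order part that never changes; condition~(iv) of Definition~\ref{def: system2} was included precisely to make the first of these hold, so beyond assembling these observations no further work is required.
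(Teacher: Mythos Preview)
Your proposal is correct and follows essentially the same approach as the paper: start with Harrington's theorem to get into $\WKL_0$, then dovetail through all systems appearing in earlier models using Lemma~\ref{lemma: adding ap point}, and take the union. Your treatment of the absoluteness issues (systems remain systems, almost periodic points remain almost periodic, $I\Sigma_1$ transfers because any counterexample lives in some $\M_s$) is, if anything, more explicit than the paper's, but the argument is the same.
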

\begin{proof}
Let $\M$ be a countable model of $\RCA_0$. Let $g:\omega \rightarrow \omega \times \omega$ be a bijection such that for all $n$, if $(i,j) = g(n)$ then $\max\{i,j\} \le n$. (Note that $\omega$ is the real $\omega$ and $g$ exists outside the model.)

Using Harrington's theorem,  let $\M_0$ be a model of $\WKL_0$ such that $\M$ is an $\omega$-submodel of $\M_0$. Applying  Lemma~\ref{lemma: adding ap point}, let $\M_{n+1}$ be a model of $\WKL_0$, such that  $\M_n$ is an $\omega$-submodel of $\M_{n+1}$, and $\M_{n+1}$ contains an almost periodic point for the $i$\textsuperscript{th} system in $\M_j$ where $(i,j) = g(n)$. Let $\M_\omega = \bigcup_i \M_i$. 

Clearly $\M_\omega \models PA^{-} + \msf{AP}$. It also models $I\Sigma_1$ induction because if not there is some function $f:\setN \rightarrow \setN$ in $\M_\omega$ whose range has no least element. But if so, for some $n$, $f \in \M_n$ contradicting the fact that $\M_n \models \WKL_0$.
\end{proof}

We obtain the following corollary by applying the standard argument (see for example \cite[Corollary~IX.2.6]{Simpson_2009}).
\begin{corollary} $\RCA_0 +\msf{AP}$ is conservative over $\RCA_0$ for $\Pi^1_1$ sentences.
\end{corollary}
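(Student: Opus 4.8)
The plan is to deduce this corollary directly from the model-theoretic theorem proved just above, via the standard reverse-mathematics machinery for transferring model-extension results into conservation results. Since the previous theorem guarantees that every countable model $\M$ of $\RCA_0$ embeds as an $\omega$-submodel of some countable model $\hat{\M}$ of $\RCA_0 + \msf{AP}$, and $\omega$-submodels share the same first-order part, the corollary follows by a routine argument. I would cite \cite[Corollary~IX.2.6]{Simpson_2009} (as the author already indicates) and then sketch why that argument applies.

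Here is the argument I would present. Suppose, for contradiction, that $\RCA_0 + \msf{AP}$ proves some $\Pi^1_1$ sentence $\psi$ that is not provable in $\RCA_0$. Then there is a countable model $\M \models \RCA_0 + \neg\psi$. Since $\psi$ is $\Pi^1_1$, write $\psi \equiv (\forall Z)\theta(Z)$ with $\theta$ arithmetical; so there is some $Z_0$ in $\M$ with $\M \models \neg\theta(Z_0)$. By the preceding theorem, there is a countable model $\hat{\M} \models \RCA_0 + \msf{AP}$ such that $\M$ is an $\omega$-submodel of $\hat{\M}$. In particular $Z_0$ is a real of $\hat{\M}$, and since $\M$ and $\hat{\M}$ have the same first-order part and $\theta$ is arithmetical, $\M \models \neg\theta(Z_0)$ implies $\hat{\M} \models \neg\theta(Z_0)$ (absoluteness of arithmetical formulas between $\M$ and its $\omega$-submodel relationship, reading $Z_0$ as a parameter in both). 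Hence $\hat{\M} \models \neg\psi$, contradicting $\hat{\M} \models \RCA_0 + \msf{AP} \vdash \psi$. Therefore $\RCA_0 + \msf{AP}$ proves no $\Pi^1_1$ sentence beyond those provable in $\RCA_0$, which is the assertion of conservativity.

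The only subtle point — and the one I would be most careful about — is the absoluteness step: one needs that for an arithmetical formula $\theta$ and a real parameter $Z_0$ present in both models, $\M \models \theta(Z_0)$ iff $\hat{\M} \models \theta(Z_0)$. This is exactly where the $\omega$-submodel hypothesis does its work: the first-order parts coincide, so number quantifiers range over the same domain, and set quantifiers do not occur in $\theta$, so the truth value of $\theta(Z_0)$ depends only on the shared first-order structure together with the fixed real $Z_0$. I expect this to be completely standard and not a genuine obstacle, which is why invoking \cite[Corollary~IX.2.6]{Simpson_2009} suffices; the real content of the result is the model-extension theorem already established, and the corollary is merely its packaging as a proof-theoretic conservation statement.
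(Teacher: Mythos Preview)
Your proposal is correct and follows exactly the paper's approach: the paper simply invokes the standard argument (citing \cite[Corollary~IX.2.6]{Simpson_2009}) to pass from the preceding $\omega$-submodel extension theorem to $\Pi^1_1$-conservativity, and you have spelled out that argument faithfully.
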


\section{A Subclass of PA Degrees}
\label{sect: PA degrees}
\noindent Consider the set of reals that  given any computable system $(C,f)$ can compute an almost periodic point for this system. This is an upwards-closed subclass of the $\PA$ degrees. By Theorem~\ref{thm: WKL does not imply AP}, we know that this is a strict subclass of the $\PA$ degrees. 
The following corollary shows that this subclass does not coincide with those $\PA$ degree above $\zj$

\begin{corollary}[Corollary to Theorem~\ref{thm: AP does not imply ACA}]
There is a set $X$ of $\PA$ degree such that $X \not \ge_T \zj$ and $X$ computes an almost periodic point for every computable system.
\end{corollary}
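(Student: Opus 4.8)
The plan is to combine Theorem~\ref{thm: AP does not imply ACA} with the standard machinery for building low $\omega$-models together with the low basis theorem, exploiting the fact that $\msf{AP}$ is $\Pi^1_2$ and the model one extracts from the proof of Theorem~\ref{thm: AP does not imply ACA} has all reals low.

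First I would recall how Theorem~\ref{thm: AP does not imply ACA} was obtained: one iterates the lemma that every system $(C,f)$ has an almost periodic point $X$ with $X' \le_T (C\oplus f)'$, building an increasing sequence of reals $Y_0 \le_T Y_1 \le_T \cdots$ whose union is the second-order part of an $\omega$-model of $\RCA_0 + \msf{AP}$ in which every real $Y$ satisfies $Y' \le_T \zj$ — in particular, every real is low, so no real computes $\zj$. The construction is uniform enough that one may arrange $Y_0$ to be an arbitrary fixed real of $\PA$ degree, provided that real is itself low; more precisely, the relevant lemma relativizes, so starting the iteration above a real $Z$ of $\PA$ degree with $Z' \le_T Z \oplus \zj$ produces a model in which every real $Y$ satisfies $Y' \le_T Z'$. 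Then I would invoke the low basis theorem to fix a real $Z$ of $\PA$ degree with $Z' \equiv_T \zj$ (so $Z \not\ge_T \zj$, since if $Z \ge_T \zj$ then $Z' \ge_T \zj'$, which is impossible for a low real), and run the relativized iteration over $Z$. The resulting model $\M$ satisfies $\RCA_0 + \msf{AP}$, has all reals Turing below $Z'\equiv_T\zj$ (indeed each real $Y\in\M$ has $Y'\le_T\zj$), and contains $Z$ itself.

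The conclusion then follows by a compactness/absoluteness argument: inside $\M$ every computable system has an almost periodic point, and $\msf{AP}$ as formalized is a $\Pi^1_2$ statement of the form "for every system there exists an almost periodic point", so for each of the countably many computable systems $(C_e,f_e)$ the model contains a witness $X_e\in\M$ with $X_e\le_T\zj$; but actually I want a single real that computes such a witness uniformly. For that I would instead argue directly: let $X$ be any set of $\PA$ degree with $X\not\ge_T\zj$ produced so that $\{X\}$ is a $\Pi^0_1$-class member forming (together with the sets below it) a model of $\msf{AP}$ in the sense of the proof of Theorem~\ref{thm: AP does not imply ACA} — equivalently, take $X$ to be the real whose Turing-below-ideal is the second-order part of such a model; then for each computable system the almost periodic point guaranteed inside the model is a real $\le_T X$, and $X$ computes it. Since $X$ is of $\PA$ degree and $X\not\ge_T\zj$, this is exactly the desired real.

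The main obstacle I anticipate is the uniformity/relativization bookkeeping: one must check that the proof of the key lemma (every system has an almost periodic point $X$ with $X'\le_T(C\oplus f)'$) relativizes cleanly and that the $\omega$-model construction can be started over a prescribed low $\PA$-degree base, so that the final ideal still sits below a low oracle while remaining of $\PA$ degree; and one must confirm that "$X$ computes an almost periodic point for every computable system" can be read off from $X$ being (a real coding the second-order part of) such a model, rather than merely being a member of it. Neither point is deep — it is the same iteration as in Section~\ref{sect: ACA AP} with the low basis theorem supplying the starting real — but it is where care is required.
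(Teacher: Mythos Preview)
Your proposal has a genuine gap at exactly the step you flag as the ``main obstacle'': passing from the \emph{ideal} of low reals produced in Section~\ref{sect: ACA AP} to a \emph{single} real $X$ that bounds the whole ideal yet still fails to compute $\zj$. You write ``take $X$ to be the real whose Turing-below-ideal is the second-order part of such a model'', but no such real need exist: the ideal is an increasing union $\{Z : (\exists n)\, Z \le_T Y_n\}$ and is generically non-principal. More importantly, even if you only want an upper bound, an arbitrary upper bound for a chain of low reals can easily compute $\zj$ (indeed $\zj$ itself is such a bound). Nothing in your sketch explains why the particular $X$ you produce avoids $\zj$; appealing to ``$\{X\}$ is a $\Pi^0_1$-class member forming together with the sets below it a model of $\msf{AP}$'' is circular, since that is precisely the content of the corollary.

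The paper closes this gap with a short explicit diagonalization. Listing the ideal as $\{Z_i\}_{i\in\omega}$, one builds $X$ column by column: at stage $e$ first fix finitely many additional columns so as to force $\Phi_e^X \ne \zj$, and then place $Z_e$ into a fresh column. The forcing step succeeds because at each stage only finitely many columns have been committed, and the join of those columns is computable from a finite join of the $Z_i$'s, which (being low) does not compute $\zj$; hence one can always find an $n$ and a finite extension witnessing $\Phi_e^X(n) \ne \zj(n)$. The resulting $X$ bounds every $Z_i$ (so it is of $\PA$ degree and computes an almost periodic point for every computable system) while $\Phi_e^X \ne \zj$ for all $e$. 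Your relativized-low-basis detour is not needed for this; the only fact used about the ideal is that no finite join of its members computes $\zj$, which already follows from the construction in Section~\ref{sect: ACA AP}.
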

\begin{proof}
Let $\{Z_i\}_{i\in \omega}$ be a listing of the  ideal used to separate $\ACA$ from $\msf{AP}$ over $\RCA_0$. Observe that no finite join of this sequence computes~$\zj$.

Construct $X$ by at stage $e$ defining sufficient columns of $X$ to force that $\Phi_e^X \ne \zj$, and then append $Z_e$ to an empty column of $X$.  The set $X$ bounds all elements of the ideal so $X$ is of $\PA$ degree.
\end{proof}

The $\PA$ degrees have been extensively studied. However, this subclass does not appear to have been encountered before and it merits further investigation. 

\begin{question}
\label{Q1}
Are there any other characterizations of this subclass?
\end{question}

A useful  answer to Question~\ref{Q1} would give some indication as to how this subclass is dispersed in the Turing degrees. As there are computably dominated sets of $\PA$ degree, it is natural to ask the following question.

\begin{question}
Does this subclass have a computably dominated element?
\end{question}

For this subclass to have a computably dominated element, it is necessary that the following question has a positive answer. 

\begin{question}
Does every computable system have an almost periodic point that is computably dominated?
\end{question}

\bibliographystyle{plain}
\bibliography{../Bibliography/Research}

\end{document}